\titleformat{\section}[hang]%
{\bfseries\large}{\thesection.}{1ex}{}%
\titleformat{\subsection}[hang]%
{\bfseries}{\thesubsection}{1ex}{}%
\theoremstyle{definition}
\newtheorem{definition}[subsection]{\definitionautorefname}
\newcommand{\definitionautorefname}{Definition}
\newtheorem{construct}[subsection]{\constructautorefname}
\newcommand{\constructautorefname}{Construction}
\newtheorem{conj}[subsection]{\conjautorefname}
\newcommand{\conjautorefname}{Conjecture}
\theoremstyle{remark}
\newtheorem{remark}[subsection]{\remarkautorefname}
\newcommand{\remarkautorefname}{Remark}
\newtheorem{subremark}[subsubsection]{\remarkautorefname}
\newtheorem{exmp}[subsection]{\exmpautorefname}
\newcommand{\exmpautorefname}{Example}
\newtheorem{subexmp}[subsubsection]{\exmpautorefname}
\newcommand{\warningautorefname}{Warning}
\newtheorem{subwarning}[subsubsection]{\warningautorefname}
\newtheorem{notation}[subsection]{\notationautorefname}
\newcommand{\notationautorefname}{Notation}
\theoremstyle{plain} \newtheorem{thm}[subsection]{\thmautorefname}
\newcommand{\thmautorefname}{Theorem}
\newtheorem{supthm}{\thmautorefname}
\newtheorem{corlr}[subsection]{\corlrautorefname}
\newcommand{\corlrautorefname}{Corollary}
\newtheorem{subcorlr}[subsubsection]{\corlrautorefname}
\newcommand{\prorautorefname}{Property}
\newtheorem{pros}[subsection]{\prosautorefname}
\newcommand{\prosautorefname}{Proposition}
\newtheorem{lemma}[subsection]{\lemmaautorefname}
\newcommand{\lemmaautorefname}{Lemma}
\newcommand{\scholiumautorefname}{Scholium}
\Crefname{definition}{\definitionautorefname}{\definitionautorefname{}s}
\Crefname{construct}{\constructautorefname}{\constructautorefname{}s}
\Crefname{conj}{\conjautorefname}{\conjautorefname{}s}
\Crefname{remark}{\remarkautorefname}{\remarkautorefname{}s}
\Crefname{exmp}{\exmpautorefname}{\exmpautorefname{}s}
\Crefname{warning}{\warningautorefname}{\warningautorefname{}s}
\Crefname{notation}{\notationautorefname}{\notationautorefname{}s}
\Crefname{thm}{\thmautorefname}{\thmautorefname{}s}
\Crefname{corlr}{\corlrautorefname}{\corlrautorefname{}s}
\Crefname{pror}{\prorautorefname}{\prorautorefname{}s}
\Crefname{pros}{\prosautorefname}{\prosautorefname{}s}
\Crefname{lemma}{\lemmaautorefname}{\lemmaautorefname{}s}
\Crefname{scholium}{\scholiumautorefname}{\scholiumautorefname{}s}
\DeclareRobustCommand\widecheck[1]{{\mathpalette\@widecheck{#1}}}
\def\@widecheck#1#2{%
  \setbox\z@\hbox{\m@th$#1#2$}%
  \setbox\tw@\hbox{\m@th$#1%
    \widehat{%
      \vrule\@width\z@\@height\ht\z@
      \vrule\@height\z@\@width\wd\z@}$}%
  \dp\tw@-\ht\z@
  \@tempdima\ht\z@ \advance\@tempdima2\ht\tw@ \divide\@tempdima\thr@@
  \setbox\tw@\hbox{%
    \raise\@tempdima\hbox{\scalebox{1}[-1]{\lower\@tempdima\box
        \tw@}}}%
  {\ooalign{\box\tw@ \cr \box\z@}}}
\DeclareRobustCommand{\bbDelta}{{\mathpalette\bb@Delta\relax}}
\newcommand{\bb@Delta}[2]{%
  \begingroup
  \sbox\z@{$\m@th#1\Delta$}%
  \dimendef\Dht=6 \dimendef\Dwd=8
  \setlength{\Dwd}{\wd\z@}%
  \setlength{\Dht}{\ht\z@}%
  \begin{picture}(\Dwd,\Dht)
  \put(0,0){$\m@th#1\Delta$}
  \put(.42\Dwd,.7\Dht){\line(10,-26){.25\Dht}}
  \end{picture}%
  \endgroup
}
\DeclareRobustCommand{\bbGamma}{{\mathpalette\bb@Gamma\relax}}
\newcommand{\bb@Gamma}[2]{%
  \begingroup
  \sbox\z@{$\m@th#1\Gamma$}%
  \dimendef\Dht=6 \dimendef\Dwd=8
  \setlength{\Dwd}{\wd\z@}%
  \setlength{\Dht}{\ht\z@}%
  \begin{picture}(\Dwd,\Dht)
  \put(0,0){$\m@th#1\Gamma$}
  \put(.47\Dwd,.025\Dht){\line(0,1){.9\Dht}}
  \end{picture}%
  \endgroup
}
\DeclareRobustCommand{\bbOmega}{{\mathpalette\bb@Omega\relax}}
\newcommand{\bb@Omega}[2]{%
  \begingroup
  \sbox\z@{$\m@th#1\Omega$}%
  \dimendef\Dht=6 \dimendef\Dwd=8
  \setlength{\Dwd}{\wd\z@}%
  \setlength{\Dht}{\ht\z@}%
  \begin{picture}(\Dwd,\Dht)
  \put(0,0){$\m@th#1\Omega$}
  \put(.275\Dwd,.125\Dht){\line(0,1){.775\Dht}}
  \put(.6825\Dwd,.17\Dht){\line(0,1){.725\Dht}}
  \end{picture}%
  \endgroup
}
\DeclareRobustCommand{\bbUpsilon}{{\mathpalette\bb@Upsilon\relax}}
\newcommand{\bb@Upsilon}[2]{%
  \begingroup
  \sbox\z@{$\m@th#1\Omega$}%
  \dimendef\Dht=6 \dimendef\Dwd=8
  \setlength{\Dwd}{\wd\z@}%
  \setlength{\Dht}{\ht\z@}%
  \begin{picture}(\Dwd,\Dht)
  \put(0,0){$\m@th#1\Upsilon$}
  \put(.55\Dwd,.033\Dht){\line(0,1){.75\Dht}}
  \end{picture}%
  \endgroup
}
\newcommand\mapstofonc{\mathrel{\ooalign{$\rightsquigarrow$\cr%
  \kern-.105ex\raise.325ex\hbox{\scalebox{1}[0.388]{$\mid$}}\cr}}}
\newcommand{\addchar}[2]{%
  \@tfor\letter:=#1\do{%
    \letter#2
  }%
}
\DeclareMathOperator{\id}{id}%
\newcommand{\cat}[1]{\mathfrak{#1}}
\newcommand{\op}[1]{{#1}^{\mathrm{op}}}
\newcommand{\func}[1]{\mathcal{\addchar{#1}{\!}}\,}
\newcommand{\internalcat}[1]{\mathbb{#1}}
\newcommand{\lmtimes}{\mathop{\times}\limits}
\newcommand{\grothco}[1][{}]{\int^{#1}}%
\newcommand{\infgrpds}{\cat{\infty\textnormal{-}Grpd}}
\newcommand{\inflcats}[1]{\cat{\mathnormal{(\infty,#1)}\textnormal{-}Cat}}
\newcommand{\infcats}{\inflcats{1}}
\newcommand{\funcs}[2]{\bigl\{\cat{#1},\cat{#2}\bigr\}}
\newcommand{\seg}[1]{\cat{Seg}_{#1}}
\newcommand{\inrt}{\text{\textnormal{inrt}}}
\newcommand{\activ}{\text{\textnormal{act}}}
\newcommand{\elem}{\text{\textnormal{el}}}
\DeclareMathOperator{\ev}{ev}
\DeclareMathOperator{\const}{\func{const}}
\DeclareMathOperator*{\colim}{colim}
\DeclarePairedDelimiter{\uly}{\lvert}{\rvert}
\DeclarePairedDelimiter{\restr}{{}}{\rvert}
\DeclarePairedDelimiter{\characmap}{\ulcorner}{\urcorner}
\title{\vskip 5pt \bf ALL SEGAL OBJECTS ARE GENERALISED MONADS IN
  SPANS}
\author{\itshape\bfseries { David KERN}}
\date{}
\begin{document}
\maketitle

\cfoot{}
\thispagestyle{empty}
\vskip 25pt
\begin{adjustwidth}{0.5cm}{0.5cm} {\small {\bf R\'esum\'e.} Nous
    étendons la construction par Barwick et Haugseng d'une
    $\infty$-catégorie double de correspondances dans une
    $\infty$-catégorie $\cat{C}$ admettant les produits fibrés à des
    formes plus générales : pour une large classe de patrons
    algébriques $\cat{P}$, nous définissons une $\infty$-catégorie
    $\cat{P}$-monoïdale de correspondances $\cat{P}$-modelées dans
    $\cat{C}$, et identifions les $\cat{P}$-monades dedans avec les
    $\cat{P}$-objets de Segal dans $\cat{C}$. Pour le patron
    cellulaire $\op{\Theta}$, cela recouvre une reformulation
    homotopique de la définition originale de Batanin des
    $\omega$-catégories faibles, et en général peut être vu comme une
    variante des multicatégories généralisées de Burroni, Hermida,
    Leinster et Cruttwell--Shulman.\\
    {\bf Abstract.} We extend Barwick's and Haugseng's construction of
    the double $\infty$-category of spans in a pullback-complete
    $\infty$-category $\cat{C}$ to more general shapes: for a large
    class of algebraic patterns $\cat{P}$, we define a
    $\cat{P}$-monoidal $\infty$-category of $\cat{P}$-shaped spans in
    $\cat{C}$, and we identify $\cat{P}$-monads in it with Segal
    $\cat{P}$-objects in $\cat{C}$. For the cell pattern
    $\op{\Theta}$, this recovers a homotopical reformulation of
    Batanin's original definition of weak $\omega$-categories, and in
    general can be seen as a variant of the generalised
    multicategories
    of Burroni, Hermida, Leinster and Cruttwell--Shulman.\\
    {\bf Keywords.} Segal objects, spans, double categories, weak
    $\omega$-categories, multicategories.\\
    {\bf Mathematics Subject Classification (2020).} 18N65, 18N70.  }
\end{adjustwidth}


\section{Introduction}
\label{sec:introduction}

\renewcommand{\thesupthm}{\Alph{supthm}}

\subsection{Algebraic structures for higher categories}
\label{sec:algebr-struct-high}

The various definitions of higher categories come in two families:
algebraic definitions specify the minimal amount of shape data (for
$\ell$-categories, an $\ell$-graph, comprised only of elementary
cells) and add the structure of all the composition operations and
their higher coherences, while geometric definitions start from a
bigger shape containing all the possible pasting diagrams of cells and
simply impose conditions to ensure that they come from decompositions
into compatible elementary cells.

For example, the standard definition of an internal category, in a
category $\cat{C}$ admitting finite pullbacks, is as a
$\op{\bbDelta}$-shaped object $X_{\bullet}$ of $\cat{C}$ --- where
$\bbDelta$ is the category of free categories on $1$-dimensional
pasting diagrams, that is sequences of composable arrows ---
satisfying the Segal decomposition condition which expresses each
value $X_{n}$ on a pasting of $n$ consecutive arrows as
$X_{1}\times_{X_{0}}\dots\times_{X_{0}}X_{1}$. This can be
reinterpreted in a more algebraic way as giving a graph
$\restr{X_{\bullet}}_{\{[0],[1]\}}$ in $\cat{C}$ and a certain kind of
algebra structure on it, subject to the simplicial identities. To make
good sense of this algebra structure, it was noticed
by~\cite{bénabou67:_introd} that a graph in $\cat{C}$ is nothing but
an endomorphism in the bicategory (or better, the double category) of
spans in $\cat{C}$, and the required algebra structure is none other
than a structure of monad on this endomorphism.

For \emph{strict} higher categories, the situation generalises
directly: on the one hand, \cite{joyal97:_disks_theta} introduced a
category $\Theta$ of free $\omega$-categories on $\omega$-categorical
pasting diagrams, so that strict $\omega$-categories in any category
with fibre products $\cat{C}$ are exactly $\cat{C}$-valued presheaves
on $\Theta$ satisfying a Segal condition. On the other hand,
\cite{batanin98:_monoid_globul_categ_as_natur} constructed an internal
(strict) $\omega$-category in $\cat{Cat}$ (a globular object in
$\cat{Cat}$ equipped with compositions)
$\internalcat{Span}_{\infty}(\cat{C})$ of infinitely iterated spans in
$\cat{C}$, so that globular monads in it are exactly strict
$\omega$-categories internal to $ \cat{C}$.

The key insight of~\cite{batanin98:_monoid_globul_categ_as_natur} is
then that, using the higher structure naturally present in globular
categories, one can refine the teminal globular operad to a suitable
contractible globular operad $\mathcal{A}_{\infty}^{\mathbb{G}}$,
which contains enough coherence data for
$\mathcal{A}_{\infty}^{\mathbb{G}}$-algebras in
$\internalcat{Span}_{\infty}(\cat{C})$ to be a good definition of weak
$\omega$-categories in $\cat{C}$.

While the presence of higher cells in globular sets allows one to make
sense of $\mathcal{A}_{\infty}^{\mathbb{G}}$ as an algebraic
resolution of the terminal globular operad, eschewing any homotopical
machinery, formulating things in a setting of homotopy theory allows
many constructions to become simpler, and more widely
applicable. Indeed, the logic of using the higher cells to tame the
infinite towers of coherences needed for a resolution only works for
full $\omega$-categories, but breaks down if trying to define weak
$\ell$-categories for some $\ell<\omega$. Nonetheless,
\cite{haugseng21:_segal} showed that the situation for (weak)
$1$-categories can be dealt with using $\infty$-categories: category
objects in an $(\infty,1)$-category $\cat{C}$ are identified with
homotopy-coherent monads in the double $(\infty,1)$-category of spans
in $\cat{C}$.

In this note, we extend this result (as a direct application
of~\Cref{thm:monads-spans-segal-objs}) to a characterisation of
$\ell$-category objects as $\ell$-globular monads in $\ell$-times
iterated spans, which both extends Batanin's definition of weak
$\omega$-categories to one for weak $\ell$-categories for any
$\ell\leq\omega$, and also simplifies it by removing the need to
resolve the terminal globular $\infty$-operad by a more complicated
one.

\subsection{Multicategories and algebraic patterns}
\label{sec:mult-algebr-patt}

In order to understand how to construct categories of generalised
spans, let us switch gears to another categorical structure that can
be defined in a similar way: multicategories, or coloured operads. It
was noticed
by~\cite{burroni71:_t_catég,hermida04:_fibrat,leinster98:_gener_operad_multic}
that multicategories can be defined as monads in a double category of
Kleisli $\func{M}$-spans, where $\func{M}$ is the ``free monoid''
monad on $\cat{Set}$, fitting in a more general framework of
$\func{T}$-multicategories, for a cartesian monad $\func{T}$, as
monads in a double category of Kleisli $\func{T}$-spans, whose
morphisms are the spans twisted by $\func{T}$ on their source, and
whose composition uses $\func{T}$'s monad structure. In particular,
Batanin's globular operads can also be obtained in this way.

Unfortunately, this double category of Keisli $\func{T}$-spans is not
characterised by a clear universal property (see~\cite[Remark
4.2]{cruttwell10}), which makes constructing it in the
$\infty$-categorical world very difficult. Because of this, we will
instead use a different kind of structure to organise the generalised
spans.

To explain the idea, let us keep focusing of the example of
multicategories. An $\func{M}$-span from a set $Y_{0}$ to a set
$X_{0}$ is given by a span $\func{M}Y_{0}\leftarrow X_{1}\to X_{0}$,
which we interpret as a multispan (as championed by~\cite{baas19} for
the study of hyperstructures) of some arbitrary arity $a+1$, one of
whose legs (the root) goes to $X_{0}$ and the $a$ others (the leaves)
to $Y_{0}$. To compose it with an $\func{M}$-span
$\func{M}Z_{0}\leftarrow Y_{1}\to Y_{0}$, one forms the
$\func{M}$-span
\begin{equation}
  \label{eq:compose-multispans}
  \begin{tikzcd}[cramped]
    & & & \func{M}Y_{1}\lmtimes_{\func{M}Y_{0}}X_{1} \arrow[dr]
    \arrow[dl] & & \\
    & & \func{M}Y_{1} \arrow[dr] \arrow[dl] & & X_{1} \arrow[dr]
    \arrow[dl] & \\
    \func{M}Z_{0} & \func{M}^{2}Z_{0} \arrow[l,"\mu"'] & &
    \func{M}Y_{0} & & X_{0}
  \end{tikzcd}
\end{equation}
expressing that one takes $a$ copies of the multispan corresponding to
$\func{M}Z_{0}\leftarrow Y_{1}\to Y_{0}$ and glues their distinguished
roots to the various leaves of
$\func{M}Y_{0}\leftarrow X_{1}\to X_{0}$.

As is usual in operad theory, one also, instead of blowing up the
situation globally, glue a single new span to one leaf of
$\func{M}Y_{0}\leftarrow X_{1}\to X_{0}$; the composition operation
defined in this way, leaf by leaf, will no longer be a categorical
composition, but indeed an operadic one. Thus, multispans can be
organised, instead of in a double category, in a categorical operad
(internal category in the category of operads).

While there are many different approaches to operadic structures in
the $1$-categorical setting, in the $\infty$-categorical one a very
convenient and powerful framework is that of the algebraic patterns
of~\cite{chu21:_homot_segal}, which extract the necessary data on a
category of shapes to speak of Segal decompositions (inert morphisms
from elementary objects) and keep additional algebraic operations
(active morphisms): in other words, they give a geometric presentation
of $\infty$-operadic structure, while remembering what is the
algebraic part. In the approach that we will follow in this note, the
choice of an algebraic pattern will play the role of the choice of the
cartesian monad $\func{T}$ in the story sketched above.

We will then construct in~\cref{sec:gener-mult}, for any algebraic
pattern $\cat{P}$ (satisfying the very mild condition of soundness ---
that will be verified in all examples we know of, in particular
in~\cref{sec:globl-satur-cell} for $\omega$-categories) and any
complete enough $(\infty,1)$-category $\cat{C}$, a Segal
$\cat{P}$-object $\internalcat{Span}_{\cat{P}}(\cat{C})$ in $\infcats$
of $\cat{P}$-shaped spans in $\cat{C}$, by adapting the construction
of~\cite{haugseng18:_iterat} with the ideas raised in~\cite{street00}
and expanded upon in~\cite[Example 4.8]{weber07:_yoned_struc}. We will
continue in~\cref{sec:monads-spans} by showing the result promised
above:
\begin{supthm}(\emph{cf.}~\Cref{thm:monads-spans-segal-objs})
  \label{thm:main-result}
  There is an equivalence between $\cat{P}$-monads in the
  $\cat{P}$-monoidal $\infty$-category
  $\internalcat{Span}_{\cat{P}}(\cat{C})$ and Segal $\cat{P}$-objects
  in $\cat{C}$.
\end{supthm}

Then, in~\cref{sec:some-exampl-flav}, we will spell out the meaning of
this result in examples of interest, in particular
$(\infty,\ell)$-categories.

\subsection{Aknowledgements}
\label{sec:aknowledgements}

This note was closely inspired by the ideas
of~\cite{haugseng18:_iterat,haugseng21:_segal} and~\cite{street00},
and would not exist without the insights developed in these
works. Thanks are also due to Damien Calaque for discussions about
algebras in iterated spans, to Hugo Pourcelot for conversations about
differently-shaped spans, and to Reuben Stern for useful comments
about the interpretation of fibrous $\op{\Theta}$-patterns. Many
thanks to Jan Steinebrunner and Thomas Blom for pointing out the
necessity of soundness in~\cref{lemma:spans-are-cocart-fib} and the
automaticity of global saturation, and to Jan especially sharing
material on sound patterns and a proof of global saturation. I thank
the anonymous reviewer for several corrections and improvements.

The author was supported by the Göran Gustafsson Foundation for
Research in Natural Sciences and Medicine.

\section{Algebraic and fibrous patterns}
\label{sec:weak-segal-fibr}

\begin{definition}[Algebraic pattern]
  An \textbf{algebraic pattern} is a diagram of inclusions of
  $(\infty,1)$-categories
  \begin{equation}
    \label{eq:algpatrn-def}
    \begin{tikzcd}
      & & \cat{P} & \\
      \cat{P}^{\elem} \arrow[r,hook] & \cat{P}^{\inrt} \arrow[ur,hook]
      & & \cat{P}^{\activ} \arrow[ul,hook']
    \end{tikzcd}
  \end{equation}
  where the wide sub-$(\infty,1)$-categories
  $(\cat{P}^{\inrt},\cat{P}^{\activ})$ form an orthogonal
  factorisation system on $\cat{P}$ and
  $\cat{P}^{\elem}\subset\cat{P}^{\inrt}$ is a full
  sub-$(\infty,1)$-category.
\end{definition}

The \textbf{inert} arrows (those in $\cat{P}^{\inrt}$) are denoted as
$\rightarrowtail$ and the \textbf{active} ones (those in
$\cat{P}^{\activ}$) are denoted as $\rightsquigarrow$, while the
objects in $\cat{P}^{\elem}$ are known as \textbf{elementary}.

\begin{notation}
  For any $P\in\cat{P}$, we write
  $\cat{P}^{\elem}_{P/}=\cat{P}^{\elem}\times_{\cat{P}}\cat{P}^{\elem}_{P/}$.
  
  An $(\infty,1)$-category $\cat{C}$ is said to be
  \textbf{$\cat{P}$-complete} if it admits limits of diagrams of shape
  $\cat{P}^{\elem}_{P/}$ for any $P\in\cat{P}$.
\end{notation}

\begin{definition}[Segal object]
  Let $\cat{P}$ be an algebraic pattern and $\cat{C}$ a
  $\cat{P}$-complete $(\infty,1)$-category. A \textbf{Segal
    $\cat{P}$-object} in $\cat{C}$ is a functor
  $\func{X}\colon\cat{P}\to\cat{C}$ such that
  $\restr{\func{X}}_{\cat{P}^{\inrt}}$ is the right Kan extension of
  its restriction to $\cat{P}^{\elem}$, which means that for any
  $P\in\cat{P}$, the canonical arrow
  \begin{equation}
    \label{eq:segal-cond-def}
    \func{X}(P)\to\lim_{E\in\cat{P}^{\elem}_{P/}}\func{X}(E)
  \end{equation}
  is an equivalence.
\end{definition}

The full sub-$(\infty,1)$-category of the functor
$(\infty,1)$-category $\funcs{P}{C}$ on the Segal objects is denoted
$\seg{\cat{P}}(\cat{C})$.

\begin{exmp}[Product patterns]
  \label{exmp:product-pattrn}
  The $(\infty,1)$-category of algebraic patterns admits all limits,
  which can be computed at the level of the underlying
  $(\infty,1)$-categories. In particular, it admits products, and
  these are compatible with currying, in that if $\cat{P}$ and
  $\cat{Q}$ are two algebraic patterns and $\cat{C}$ is
  $\cat{P}\times\cat{Q}$-complete, then $\seg{\cat{Q}}(\cat{C})$ is
  $\cat{P}$-complete and there is an equivalence
  $\seg{\cat{P}\times\cat{Q}}(\cat{C})
  \simeq\seg{\cat{P}}(\seg{\cat{Q}}(\cat{C}))$.
\end{exmp}

\begin{exmp}[$\cat{P}$-graphs]
  \label{exmp:ulying-graphs}
  As observed in~\cite[beginning of \S{}8]{chu21:_homot_segal}, any
  algebraic pattern $\cat{P}$ restricts to a pattern structure on
  $\cat{P}^{\inrt}$, whose only active morphisms are the equivalences,
  and further restricts to $\cat{P}^{\elem}$. Evidently, the
  restriction--right Kan extension adjunctions along
  $\cat{P}^{\inrt,\elem}=\cat{P}^{\elem}\hookrightarrow\cat{P}^{\inrt}$
  and
  $\cat{P}^{\elem,\elem}=\cat{P}^{\elem}\hookrightarrow\cat{P}^{\elem}$
  induce equivalences
  $\seg{\cat{P}^{\inrt}}(\cat{C})\simeq\funcs{P^{\elem}}{C}$ and
  $\seg{\cat{P}^{\elem}}(\cat{C})\simeq\funcs{P^{\elem}}{C}$ for any
  $\cat{P}$-complete $(\infty,1)$-category $\cat{C}$. We will refer to
  (necessarily Segal) $\cat{P}^{\elem}$-objects as
  \textbf{$\cat{P}$-graph}, and to the restriction of a Segal
  $\cat{P}$-object to $\cat{P}^{\elem}$ as its \textbf{underlying
    $\cat{P}$-graph}.
\end{exmp}

When $\cat{C}$ is $\infcats$, Segal $\cat{P}$-objects
$\cat{P}\to\infcats$ can also be seen as categoy objects in the
$\infty$-category $\seg{\cat{P}}(\infgrpds)$ of Segal
$\cat{P}$-$\infty$-groupoids, and as such will generally be written as
$\internalcat{X},\internalcat{Y},\dots$, in the font reserved for
internal categories. Such an object
$\internalcat{X}\colon\cat{P}\to\infcats$ can be recast as a
cocartesian fibration $\cat{X}=\grothco[P]{\internalcat{X}}\to\cat{P}$
satisfying the Segal condition for its fibres. We call such fibrations
\textbf{Segal $\cat{P}$-fibrations}. A certain weakening of this
notion turns out to be extremely useful, in particular to define lax
morphisms between Segal fibrations.

\begin{definition}[Fibrous pattern]
  Let $\cat{P}$ be an algebraic pattern. A \textbf{fibrous
    $\cat{P}$-pattern} is an $(\infty,1)$-functor
  $\func{f}\colon\cat{X}\to\cat{P}$ such that:
  \begin{enumerate}
  \item for every object $X\in\cat{X}$, every inert arrow
    $i\colon\func{f}X\to P$ in $\cat{P}$ admits a
    $\func{f}$-cocartesian lift $i_{!}\colon X\to i_{!}X$;
  \item for every $P\in\cat{P}$, the commutative square
    \begin{equation}
      \label{eq:fibrous-pattrn}
      \begin{tikzcd}[column sep=large]
        \cat{X}\lmtimes_{\cat{P}}\cat{P}^{\activ}_{/P} \arrow[r]
        \arrow[d] & \lim_{E\in\cat{P}^{\mathrm{el}}_{P/}}
        \cat{X}\lmtimes_{\cat{P}}\cat{P}^{\activ}_{/E} \arrow[d] \\
        \cat{P}^{\activ}_{/P}
        \arrow[r,"\lim_{E\in\cat{P}^{\mathrm{el}}_{P/}}(P\rightarrowtail
        E)_{!}"'] &
        \lim_{E\in\cat{P}^{\mathrm{el}}_{P/}}\cat{P}^{\activ}_{/E}
      \end{tikzcd}
    \end{equation}
    is cartesian.
  \end{enumerate}
  A \textbf{morphism of fibrous $\cat{P}$-patterns} from
  $\cat{X}\to\cat{P}$ to $\cat{Y}\to\cat{P}$ is an $\infty$-functor
  $\cat{X}\to\cat{Y}$ over $\cat{P}$ preserving cocartesian arrows
  over inert arrows of $\cat{P}$.
\end{definition}

Morphisms from $\cat{X}\to\cat{P}$ to $\cat{Y}\to\cat{P}$ are also
called $\cat{X}$-algebras in $\cat{Y}$, and their
$(\infty,1)$-category is denoted $\cat{Alg}_{\cat{X}}(\cat{Y})$.

\begin{lemma}[{\cite[Lemma 9.10]{chu21:_homot_segal}}]
  The domain of fibrous pattern $\func{f}\colon\cat{X}\to\cat{P}$
  admits a structure of algebraic pattern, where an arrow is active if
  it is over an active arrow of $\cat{P}$, inert if it is
  $\func{f}$-cocartesian and lies over an inert arrow, and an object
  is elementary if it lies over an elementary of $\cat{P}$. In
  particular, Segal morphisms between (sources of) fibrous
  $\cat{P}$-patterns are exactly their morphisms of fibrous
  $\cat{P}$-patterns.
\end{lemma}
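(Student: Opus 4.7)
The plan proceeds in three steps: first constructing an orthogonal factorisation system $(\cat{X}^{\inrt}, \cat{X}^{\activ})$ on $\cat{X}$, then identifying the full subcategory $\cat{X}^{\elem}$, and finally comparing Segal morphisms with morphisms of weak Segal fibrations.

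\emph{Factorisation.} Given any arrow $\phi\colon X\to Y$ in $\cat{X}$, apply the orthogonal factorisation system on $\cat{P}$ to write $\func{f}\phi = a\circ i$ with $i\colon\func{f}X\to P$ inert and $a\colon P\to\func{f}Y$ active. Axiom~(1) of the weak Segal fibration definition gives a cocartesian lift $\tilde{i}\colon X\to i_{!}X$ of $i$, and its cocartesian universal property produces a unique $\tilde{a}\colon i_{!}X\to Y$ lying over $a$ with $\tilde{a}\tilde{i}=\phi$; such an $\tilde{a}$ is active by definition. Closure of both classes under composition is routine (composites of cocartesian arrows are cocartesian, and composites of active-covering arrows are active-covering), as is the fact that equivalences belong to both.

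\emph{Orthogonality.} The essential ingredient is the standard mapping-space characterisation of cocartesian arrows: $\tilde{i}\colon A\to B$ is $\func{f}$-cocartesian iff for every $Z\in\cat{X}$ the square
\[
\begin{tikzcd}
\map_{\cat{X}}(B,Z) \arrow[r] \arrow[d] & \map_{\cat{X}}(A,Z) \arrow[d] \\
\map_{\cat{P}}(\func{f}B,\func{f}Z) \arrow[r] & \map_{\cat{P}}(\func{f}A,\func{f}Z)
\end{tikzcd}
\]
is cartesian. Given a lifting square with inert $\tilde{i}$ on the left and active $\tilde{a}\colon C\to D$ on the right, applying this criterion with $Z=C$ describes $\{h\colon B\to C\mid h\tilde{i}=u\}$ as the space of lifts $d\colon\func{f}B\to\func{f}C$ of $\func{f}u$. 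Applying it again with $Z=D$ shows that, among such $h$, the condition $\tilde{a}h=v$ is equivalent to $ad=\func{f}v$. Hence the space of diagonal fillers in $\cat{X}$ is equivalent to the space of diagonal fillers in $\cat{P}$, which is contractible by orthogonality in $\cat{P}$.

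\emph{Elementary objects.} Take $\cat{X}^{\elem}$ to be the full subcategory of $\cat{X}^{\inrt}$ whose objects lie over $\cat{P}^{\elem}$. This tautologically embeds as a full subcategory of $\cat{X}^{\inrt}$, as required.

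\emph{Segal morphisms versus fibration morphisms.} A $\cat{P}$-functor $g\colon\cat{X}\to\cat{Y}$ between sources of weak Segal fibrations automatically sends active arrows to active arrows and elementary objects to elementary objects, since these properties are detected by the projections to $\cat{P}$ and $g$ respects them. The remaining condition for $g$ to be a Segal morphism of patterns is preservation of inert arrows, which under the pattern structures just established means preservation of $\func{f}$-cocartesian lifts of inert arrows of $\cat{P}$—exactly the defining condition for a morphism of weak Segal $\cat{P}$-fibrations. The only delicate point in the whole argument is the mapping-space manipulation in the orthogonality step, where one must carefully chain the two cocartesian pullback squares with the active leg before invoking orthogonality in $\cat{P}$.
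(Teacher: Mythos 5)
The paper offers no proof of this statement at all: it is imported wholesale as \cite[Lemma 9.10]{chu21:_homot_segal}, so there is nothing internal to compare against. Your reconstruction is correct and is essentially the standard argument (and, as far as I can tell, the one in \emph{op.\ cit.}): the inert--active factorisation of $\phi$ is obtained by lifting the factorisation of $\func{f}\phi$ through a cocartesian lift of the inert part, and orthogonality is reduced to orthogonality in $\cat{P}$ by the mapping-space characterisation of cocartesian arrows, applied once with target $C$ and once with target $D$ and then chained --- you correctly identify this chaining as the only delicate step. The routine claims (composites of cocartesian arrows over inerts are again such, closure under equivalences, closure under retracts being automatic from factorisation plus orthogonality) are indeed routine. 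The one place where you are slightly loose is the final clause: a \emph{Segal morphism} in the sense of Chu--Haugseng is not the same as a morphism of algebraic patterns; it is a functor preserving inert arrows such that restriction along it preserves Segal objects. So besides matching ``preserves inerts'' with ``preserves cocartesian lifts of inerts over $\cat{P}$'', you should also note that the Segal condition is automatic here: since cocartesian lifts are essentially unique, the projection $\cat{X}^{\elem}_{X/}\to\cat{P}^{\elem}_{\func{f}X/}$ is an equivalence for every $X$, hence any such functor $g\colon\cat{X}\to\cat{Y}$ over $\cat{P}$ induces equivalences $\cat{X}^{\elem}_{X/}\to\cat{Y}^{\elem}_{gX/}$, which are in particular initial, making $g$ a (strong) Segal morphism. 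With that one sentence added, the argument is complete.
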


If $\cat{X}\to\cat{P}$ and $\cat{Y}\to\cat{P}$ are Segal
$\cat{P}$-fibrations, with corresponding $\cat{P}$-monoidal
$(\infty,1)$-categories $\internalcat{X}$ and $\internalcat{Y}$,
morphisms of fibrous patterns $\cat{X}\to\cat{Y}$ can be seen as the
lax morphisms $\internalcat{X}\to\internalcat{Y}$.

\begin{definition}[$\cat{P}$-Monads]
  \label{def:monad}
  Let $\func{f}\colon\cat{X}\to\cat{P}$ be a fibrous
  $\cat{P}$-pattern. A \textbf{$\cat{P}$-monad} in $\cat{X}$ is a
  morphism from the terminal (weak) Segal $\cat{P}$-fibration
  $\cat{P}\xrightarrow{\id}\cat{P}$ to $\func{f}$.
\end{definition}

\begin{remark}
  When $\cat{P}$ is the algebraic pattern ${\op{\bbDelta}}^{\natural}$
  for internal categories (recalled
  in~\cref{sec:multiple-cats-iter-spans}), this recovers the usual
  definition of monads in double $\infty$-categories. More generally,
  for enrichable patterns, typically those denoted with a
  $(-)^{\natural}$ superscript in~\cite{chu21:_homot_segal},
  $\cat{P}$-monads can be thought of as a kind of $\cat{P}$-shaped
  generalisation of monads, as will be explained in examples
  in~\cref{sec:some-exampl-flav}. On the other hand, for the
  associated cartesian patterns $\cat{P}^{\flat}$, then
  $\cat{P}^{\flat}$-monads correspond rather to a kind of monoids,
  recovering the etymologically motivating observation of~\cite[\S{}
  (5.4.1)]{bénabou67:_introd}.
\end{remark}

An important technical condition on algebraic patterns will be that of
soundness from~\cite{barkan22:_envel_algeb_patter}, which we will
introduce with an alternative (equivalent) presentation due
to~\cite{barkan25:_sound_algeb_patter} that is convenient to handle.

\begin{construct}
  Let $\func{f}\colon\cat{O}\to\cat{P}$ be a morphism of algebraic
  patterns. The \textbf{inert factorisation $(\infty,1)$-category} of
  $\func{f}$ at $O\in\cat{O}$ and
  $\upsilon\colon\func{f}O\rightarrowtail
  E\in\cat{P}^{\elem}_{\func{f}O/}$ is
  \begin{equation}
    \label{eq:inert-facto-def}
    \cat{Fact}^{\inrt}_{\func{f}}(O,\upsilon)
    \coloneqq\cat{O}^{\elem}_{O/}\lmtimes_{\cat{P}^{\inrt}_{\func{f}O/}}
    \cat{Fact}_{\cat{P}^{\inrt}}(\upsilon)
  \end{equation}
  where
  $\cat{Fact}_{\cat{P}^{\inrt}}(\upsilon)
  =\{\upsilon\}\times_{\cat{Ar}_{\inrt}(\cat{P})}
  \funcs{\mathbb{3}}{\cat{P}^{\inrt}}$ is the $(\infty,1)$-category of
  factorisations of $\upsilon$ in $\cat{P}^{\inrt}$ (the pullback
  being defined relative to the functor $\mathbb{2}\to\mathbb{3}$
  which is the unique endpoints-preserving one, encoding composition
  of a composable pair).
\end{construct}

\begin{definition}[Sound patterns]
  An algebraic pattern $\cat{P}$ is \textbf{sound} if for every
  $f\colon P\rightsquigarrow P^{\prime}$ in
  $\cat{Ar}_{\activ}(\cat{P})$ and any
  $h\colon\ev_{0}f=P\rightarrowtail E$ in $\cat{P}^{\elem}_{P/}$ the
  $\infty$-category
  $\cat{Fact}^{\inrt}_{\ev_{0}\colon\cat{Ar}_{\activ}(\cat{P})\to\cat{P}}
  (f\colon P\rightsquigarrow P^{\prime},h\colon P\rightarrowtail E)$
  is contractible.
\end{definition}

One can see upon examination that the $(\infty,1)$-category
$\cat{Fact}^{\inrt}_{\ev_{0}}(f,h)$, whose objects are diagram of the form
\begin{equation}
  \label{eq:inert-facto-objs}
  \begin{tikzcd}
    P \arrow[d,rightsquigarrow,"f"'] \arrow[r,tail] & M
    \arrow[d,rightsquigarrow] \arrow[r,tail] & E\text{,} \\
    P^{\prime} \arrow[r,tail] & E^{\prime} &
  \end{tikzcd}
\end{equation}
is equivalent to that denoted
$\cat{P}^{\elem}_{h}(f)
=\cat{P}^{\elem}_{P^{\prime}/}\times_{\cat{P}^{\inrt}_{P/}}
(\cat{P}^{\inrt}_{P/})_{/h}$ of~\cite[Lemma
3.3.9. (2)]{barkan22:_envel_algeb_patter}, so that this definition
recovers the notion of soundness from Definition
3.3.4 of \emph{ibid}.

\begin{remark}
  If $\cat{P}$ is a sound algebraic pattern, fibrous
  $\cat{P}$-patterns coincide with the more familiar (as a
  generalisation of the $\infty$-operads
  of~\cite{lurie17:_higher_algeb}) weak Segal $\cat{P}$-fibrations
  (also called $\cat{P}$-operads) of~\cite{chu21:_homot_segal}. In
  particular, fibrous $\cat{P}$-patterns which are also cocartesian
  fibrations are then the same thing as Segal $\cat{P}$-fibrations.
\end{remark}

\begin{lemma}
  \label{lemma:colim-sound-pttrns}
  A filtered colimit of sound algebraic patterns is sound.
\end{lemma}

\begin{proof}
  Let $\cat{I}$ be a filtered $(\infty,1)$-category and
  $\func{P}\colon\cat{I}\to\cat{AlgPatt}$ be a diagram of algebraic
  patterns all of which are sound. For any pattern $\cat{P}$, the
  every $\infty$-category
  $\cat{Fact}^{\inrt}_{\ev_{0}\colon\cat{Ar}_{\activ}(\cat{P})\to\cat{P}}(f,h)$
  are pullbacks of powers of $\cat{P}$ by finite categories, so finite
  weighted limits, and hence their construction commutes with filtered
  colimits (in $\infcats$, and since limits and filtered colimits of
  algebraic patterns are computed in $\infcats$). Since a filtered
  colimit of contractible $(\infty,1)$-categories is
  contractible
  , as all the terms $\func{P}I$ are sound, we do obtain that the
  inert factorisation $\infty$-categories of
  $\colim_{I\in\cat{I}}\func{P}I$ are contractible.
\end{proof}

Finally, we describe a property of algebraic patterns which will be
paramount for the construction and Segality of the categories of
spans.

\begin{notation}[Co-internalisation of a category]
  \label{notation:co-internalisation}
  For any $(\infty,1)$-category $\cat{E}$, we will let $\cat{E}_{-/}$
  denote the $\infty$-functor $\op{\cat{E}}\to\infcats$ taking an
  object $E\in\cat{E}$ to the slice $\cat{E}_{E/}$ and an arrow
  $f\colon E\to E^{\prime}$ to the \textbf{codependent coproduct} (or,
  plainly, precomposition by $f$)
  $\Sigma^{f}=(-\circ f)\colon\cat{E}_{E^{\prime}/}\to\cat{E}_{E/}$. We
  refer to it as the \textbf{co-internalisation} of $\cat{E}$, though
  it differs from the internalisation of $\op{\cat{E}}$ considered
  in~\cite{street00} in that the latter, defined for $\cat{E}$
  admitting pushouts, has functoriality along $f$ given by the
  right-adjoint (co-base change) of $\Sigma^{f}$ --- however, it is
  related, after passing to presheaves, to the co-internalisation of
  $\funcs{E}{\infgrpds}$.
\end{notation}

Recall that in~\cite[Proposition 14.16. (2)]{chu21:_homot_segal}, an
algebraic pattern $\cat{P}$ is said to be \textbf{saturated} if the
inclusion $\cat{P}^{\elem}\hookrightarrow\cat{P}$ is
codense\footnote{It is written there as
  $\cat{P}^{\inrt}\hookrightarrow\cat{P}$ being codense, but the proof
  of Proposition 14.20 immediately confirms this as a typo. In
  addition, extendability is required as part of the definition, but
  it is not necessary for this characterisation.}. If the relevant
limits are co-Van Kampen, taking ``global sections'' of the
co-internalisation $(\cat{P}^{\elem}_{P/})_{-/}$ produces an
equivalence
$\colim_{E\in\op{(\cat{P}^{\elem}_{P/})}}
\cat{P}^{\elem}_{E/}\simeq\cat{P}^{\elem}_{P/}$ which we may thus
think of as \textbf{global saturation} for the pattern $\cat{P}$. As
has been observed independently by Thomas Blom and Jan Steinbrunner
(who graciously provided the following proof), this weaker property
turns out to be satisfied by every algebraic pattern, regardless of
saturation:

\begin{pros}
  \label{pros:globl-satur}
  Any algebraic pattern $\cat{P}$ is globally saturated, that is: for
  any $P\in\cat{P}$ the canonical map
  \begin{equation}
    \label{eq:strong-saturation-def}
    \colim_{E\in\op{(\cat{P}^{\elem}_{P/})}}
    \cat{P}^{\elem}_{E/}\to\cat{P}^{\elem}_{P/}
  \end{equation}
  induced by the $\Sigma^{u}$ for each $u\colon P\rightarrowtail E$ is
  an equivalence.
\end{pros}

\begin{proof}[Proof \cite{steinebr25:_global_satur}]
  All the $\infty$-categories appearing in the colimit can be
  rewritten as slices
  $\cat{P}^{\elem}_{E/}\simeq(\cat{P}^{\elem}_{P/})_{E/}$ of
  $\cat{P}^{\elem}_{P/}$.

  Now for any $(\infty,1)$-category $\cat{E}$, the colimit of the
  co-evaluation functor $\colim_{E\in\op{\cat{E}}}\cat{E}_{E/}$ can be
  computed in two steps: first take its lax colimit, which is its
  Grothendieck construction $\ev_{0}\colon\cat{Ar}(\cat{E})\to\cat{E}$
  of~\cite[Corollary 2.4.7.11]{lurie09:_higher}, and then rectify by
  localising at the $\ev_{0}$-cartesian arrows. By~\cite[Lemma
  2.4.7.5]{lurie09:_higher}, these are precisely those whose image
  under $\ev_{1}$ is an equivalence. Thus, taking
  $\cat{E}=\cat{P}^{\elem}_{P/}$, we need only exhibit $\ev_{1}$ as
  such a localisation, which it is by~\cite[Proposition
  7.1.12]{cisinski19:_higher_categ_homot_algeb} because it is a
  cocartesian fibration (whence smooth) whose fibres, admitting
  terminal objects, are contractible.
\end{proof}

This argument establishing global saturation for all algebraic
patterns is somewhat inexplicit, relying on technical properties of
localisation functors, so for many cases it can be instructive to
understand the property through more direct examination of the
pattern.

\begin{exmp}
  \cite[Proposition 5.13]{haugseng18:_iterat} shows explicitly that
  the algebraic pattern ${\op{\bbDelta}}^{\natural}$ for internal
  categories is globally saturated.
\end{exmp}

\section{Global saturation for the cell category $\Theta$}
\label{sec:globl-satur-cell}

The main example of interest for applying the result on general Segal
objects we will establish in~\cref{sec:monads-spans} is that of
(internal) $\omega$-categories. In order to provide a good
understanding of it for readers less familiar with Joyal's cell
category, we will here study ``by hand'' the global saturation of the
relevant algebraic pattern.

\begin{construct}
  Recall that the (non-reflexive) \textbf{globe category} $\mathbb{G}$
  is generated by objects $\overline{n}$, for all $n\in\mathbb{N}$,
  and arrows $i_{n}^{\pm}\colon\overline{n}\to\overline{n+1}$, as
  presented in the graph
  \begin{equation}
    \label{eq:globe-cat-generat}
    \begin{tikzcd}
      \overline{0} \arrow[r,shift left,"i_{0}^{+}"] \arrow[r,shift
      right,"i_{0}^{-}"'] & \overline{1} \arrow[r,shift
      left,"i_{1}^{+}"] \arrow[r,shift right,"i_{1}^{-}"'] & \cdots
      \arrow[r,shift left,"i_{n-1}^{+}"] \arrow[r,shift
      right,"i_{n-1}^{-}"'] & \overline{n} \arrow[r,shift
      left,"i_{n}^{+}"] \arrow[r,shift right,"i_{n}^{-}"'] &
      \cdots\text{,}
    \end{tikzcd}
  \end{equation}
  with the relations
  $i_{n+1}^{+}i_{n}^{\varepsilon}=i_{n+1}^{-}i_{n}^{\varepsilon}$ for
  any $n\in\mathbb{N}$ and any $\varepsilon\in\{+,-\}$. A
  \textbf{globular object} in an $(\infty,1)$-category $\cat{C}$ is a
  $\cat{C}$-valued presheaf on $\mathbb{G}$. A \textbf{strict
    $\omega$-category} is a globular set equipped with units and
  composition operations satisfying certain equations (spelled out for
  example in~\cite[p. 300]{street00}); such structure is monadic over
  $\funcs{\op{\mathbb{G}}}{Set}$, with monad $\func{F}\!_{\omega}$.

  The \textbf{cell category} $\Theta$ (first introduced
  in~\cite{joyal97:_disks_theta}) has as objects the globular sets
  that are pastings of appropriately composable globes --- a condition
  encoded precisely as the notion of globular sums in the sense
  of~\cite[\S{}2.1.1]{ara10:_sur_groth}
  or~\cite[\S{}1.1.2.2]{loubaton23:_theor} --- and as morphisms the
  morphisms of strict $\omega$-categories between their associated
  free $\omega$-categories. A morphism $f$ is \textbf{inert} (also
  called an immersion) if it is the image by $\func{F}\!_{\omega}$ of
  a morphism of globular sets, and \textbf{active} if in any
  factorisation $f=ia$ with $i$ inert, $i$ must be an identity
  (by~\cite[Proposition 3.3.11]{ara10:_sur_groth}, they correspond to
  the maps also known as algebraic, or covers). By~\cite[Lemma
  1.11]{berger02:_cellul_nerve_higher_categ} or~\cite[Proposition
  3.3.10]{ara10:_sur_groth}, the classes of active and inert morphisms
  form a unique factorisation system, so in particular an orthogonal
  one, on $\Theta$.
\end{construct}

\begin{notation}[Generic $n$-cells]
  For any $n\in\mathbb{N}$, the representable presheaf
  $\mathbb{G}(-,\overline{n})$ is canonically endowed with a structure
  of strict $\omega$-category (which comes from viewing it as the
  restriction to $\op{\mathbb{G}}$ of $\Theta(-,\overline{n})$). We
  denote this $\omega$-category $\cat{D}_{n}$; it is known as the
  \textbf{$n$-globe}, or as the generic (or ``walking'') $n$-cell.
\end{notation}

\begin{definition}
  \label{def:cell-pattrn-theta}
  The algebraic pattern ${\op{\Theta}}^{\natural}$ is the category
  $\op{\Theta}$, endowed with the inert--active factorisation system
  described above, and with elementary objects the $\ell$-globes
  (so that ${\op{\Theta}}^{\natural,\elem}\simeq\op{\mathbb{G}}$).
\end{definition}

It is an immediate consequence of the definition (and of the fact that
all inert maps into a globe in $\Theta$ also have to be from a globe)
that Segal ${\op{\Theta}}^{\natural}$-objects are exactly what are
called $\Theta$-models in~\cite{berger02:_cellul_nerve_higher_categ}.

\begin{lemma}[{\cite[Proposition 2.3.18]{ara10:_sur_groth}}]
  \label{lemma:theta-saturated}
  The pattern ${\op{\Theta}}^{\natural}$ is saturated.
\end{lemma}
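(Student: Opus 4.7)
The plan is first to translate saturation into a concrete statement about the cell category. For $\cat{P}=\op{\Theta}^{\natural}$, the inclusion $\cat{P}^{\elem}\hookrightarrow\cat{P}^{\inrt}$ corresponds, after passing to opposites, to the inclusion of $\mathbb{G}$ (globes) into the wide subcategory $\Theta^{\inrt}\subset\Theta$ generated by inert maps. Codensity of $\cat{P}^{\elem}\hookrightarrow\cat{P}^{\inrt}$ then dualises to the assertion that for every $T\in\Theta$ the canonical cocone presents $T$ as the colimit in $\Theta^{\inrt}$ of the forgetful diagram
\begin{equation*}
  \mathbb{G}^{\inrt}_{/T}\to\Theta,\qquad(E\rightarrowtail T)\mapsto E,
\end{equation*}
where $\mathbb{G}^{\inrt}_{/T}$ is the category of inert maps into $T$ from globes. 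Since $\Theta$ fully faithfully embeds (via $\func{F}_{\omega}$) into strict $\omega$-categories, and inert maps are by definition images of globular set morphisms under $\func{F}_{\omega}$, one may equivalently verify the colimit in $\func{Cat}_{\omega}$.

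Next, I would invoke the structural description of objects of $\Theta$ as globular sums in the sense of~\cite{ara10:_sur_groth}: every $T\in\Theta$ is by construction an iterated pushout of globes along inert boundary inclusions of lower-dimensional globes, a pushout computed inside $\Theta^{\inrt}$ itself. This tautologically presents $T$ as the colimit of a finite ``cell diagram'' $\cat{C}_T$ of globes connected by inert maps, and $\cat{C}_T$ embeds as a full subcategory into $\mathbb{G}^{\inrt}_{/T}$. The problem thus reduces to showing that this embedding is cofinal, so that the globular-sum presentation of $T$ agrees with the codensity limit we need.

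The cofinality step is the combinatorial heart of the matter, and here I would appeal to the classification of inert maps in $\Theta$: by~\cite[Proposition 3.3.11]{ara10:_sur_groth}, an inert map $\overline{k}\rightarrowtail T$ from a globe is determined by the datum of a $k$-cell of the globular pasting $T$, so every such map factors essentially uniquely through one of the vertices of $\cat{C}_T$, and the category of such factorisations is contractible (it has an initial object, namely the vertex corresponding to the chosen cell). This is exactly the content of~\cite[Proposition 2.3.18]{ara10:_sur_groth}, which I would cite wholesale rather than reprove. The main obstacle, then, is nothing internal to this note: it is the bookkeeping of inert factorisations in $\Theta$, and the cleanest route is simply to extract the needed colimit presentation from Ara's thesis.
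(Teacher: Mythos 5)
Your argument is correct and follows essentially the same route as the paper: both translate codensity into the statement that $T$ is the colimit in $\Theta^{\inrt}$ of its diagram of inert globes, present $T$ via its globular-sum pushout decomposition, and outsource the cofinality of that cell diagram inside $\Theta^{\natural,\elem}_{/T}$ to Ara (the paper cites~\cite[Lemme 2.3.22]{ara10:_sur_groth} for precisely this cofinality, where you cite Proposition 2.3.18 wholesale). The only minor caveat is that your parenthetical claim that the factorisation category has an initial object is not quite what is needed (a cell may lie in several globes of the decomposition), but since you defer that combinatorial point to Ara in any case, nothing is lost.
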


\begin{proof}
  This is essentially a consequence of the definition of globular
  sums: any such globular set $T$ can be written as an iterated
  pushout
  $T\simeq\cat{D}_{i_{1}}\amalg_{\cat{D}_{i^{\prime}_{1}}}
  \dots\amalg_{\cat{D}_{i^{\prime}_{p-1}}}\cat{D}_{i_{p}}$, and
  by~\cite[Lemme 2.3.22]{ara10:_sur_groth}, the immersions
  $\cat{D}_{i}\rightarrowtail T$ featuring in this pushout define a cofinal
  subcategory of $\Theta^{\natural,\elem}_{/T}$.
\end{proof}

It follows from this that the definition of Segal
${\op{\Theta}}^{\natural}$-objects in a complete $(\infty,1)$-category
$\cat{C}$ coincides with that of (weak) $\omega$-categories in
$\cat{C}$ in the sense of~\cite{loubaton23:_theor}, albeit without the
Rezk-completeness (or univalent completeness) condition --- so that,
to be more precise, they correspond to flagged $\omega$-categories as
in~\cite{ayala18:_flagg}.

\begin{remark}
  \label{remark:theta-finite-dim}
  For any $\ell\in\mathbb{N}\cup\{\omega\}$, we let
  \begin{equation}
    \label{eq:theta-ell-def}
    \Theta_{\ell}=\Theta\cap\inflcats{\ell}
  \end{equation}
  be the $\ell$-dimensional cell category used in~\cite{rezk10:_cart};
  the pattern structure of~\Cref{def:cell-pattrn-theta} restricts to
  one on $\op{\Theta_{\ell}}$ whose Segal objects are internal flagged
  $\ell$-categories (and we obviously have $\Theta_{\omega}=\Theta$).

  There is an obvious filtration
  ${\op{\Theta_{1}}}^{\natural}\simeq{\op{\bbDelta}}^{\natural}
  \hookrightarrow{\op{\Theta_{2}}}^{\natural}\hookrightarrow
  {\op{\Theta_{3}}}^{\natural}\hookrightarrow\cdots$ and we recover
  ${\op{\Theta}}^{\natural}$ as its colimit. In particular, since each
  ${\op{\Theta_{\ell}}}^{\natural}$ for $\ell$ finite is known to be
  sound from~\cite[Example 3.3.18]{barkan22:_envel_algeb_patter}, it
  follows from~\Cref{lemma:colim-sound-pttrns} that
  ${\op{\Theta}}^{\natural}$ is sound as well.
\end{remark}

\begin{construct}
  Since the cells in a pasting diagram are unlabelled, the standard
  representation of objects of $\Theta$ contains redundant
  information. A more minimal presentation, suggested
  by~\cite{batanin98:_monoid_globul_categ_as_natur} and developed more
  thoroughly in~\cite{berger02:_cellul_nerve_higher_categ}
  and~\cite{ara10:_sur_groth}, of these objects is as level trees,
  functors from some $\op{[\ell]}$ ($\ell$ being the categorical
  dimension) to $\bbDelta$ whose value at the terminal object $0$ is
  $[0]$: the cells in the corresponding pasting diagram can be all
  recovered as the sectors in the tree.

  This description makes it easier to get a handle on the structure of
  these trees and their categories of inert morphisms: for a tree
  $T\colon\op{[\ell]}\to\bbDelta$, for $k\leq\ell$, we set
  $\uly{T}(k)$ to be the reunion, over $i\in T(k)$, of the
  $T(k+1)_{i}+1$, where $T(k+1)_{i}$ is the fibre of $T(k+1)\to T(k)$
  at $i$ (and where we decreed $T(\ell+1)$ to be
  $[-1]=\varemptyset$). Note that the assignment
  $\op{[\ell]}\ni k\mapsto\uly{T}(k)$ is \emph{not} functorial;
  however $\op{\mathbb{G}_{\leq\ell}}\ni\cat{D}_{k} \mapsto\uly{T}(k)$
  can be made functorial.
\end{construct}

\begin{remark}
  \label{remark:sectors-and-cells}
  The objects of $\uly{T}(k)$ can be understood as the
  \textbf{sectors} at level $k$ as defined
  by~\cite{berger02:_cellul_nerve_higher_categ} (and, likewise, their
  ordering is the natural left-to-right ordering of sectors in each
  fibre), so that $\uly{T}$ coincides with the globular set denoted
  $T^{\ast}$ in~\cite{batanin98:_monoid_globul_categ_as_natur}.

  In the dictionary between globular sums and trees, it is the sectors
  of a tree that correspond to the cells of the corresponding globular
  sum.
\end{remark}

We will now use the decompositions provided by the proof
of~\Cref{lemma:theta-saturated} to understand the categories
$\Theta^{\natural,\elem}_{/T}$.

\begin{lemma}
  \label{lemma:descript-elem-slices-theta}
  Let $T\in\Theta$ be any globular sum. Then
  $\Theta^{\natural,\elem}_{/T}$ is equivalent to the Grothendieck
  construction of the globular set $\uly{T}$.
\end{lemma}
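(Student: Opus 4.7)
The plan is to identify both sides with the same data by unpacking the definition of inert morphisms in $\Theta$ and applying the Yoneda lemma.

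First I would use that, by definition, an inert morphism $\xi\colon \cat{D}_n \to T$ in $\Theta$ is the image under $\func{F}_{\omega}$ of a morphism of globular sets $\yo_{\mathbb{G}}(\overline{n}) \to \uly{T}$, and by the Yoneda lemma such data is precisely an element $x_{\xi} \in \uly{T}(n)$. Specialising the same observation to the case where the target is itself a globe $\cat{D}_m$ identifies the full subcategory $\Theta^{\natural,\elem}$ on globes with the globe category $\mathbb{G}$, since $\yo_{\mathbb{G}}$ is fully faithful and $\uly{\cat{D}_m} = \yo_{\mathbb{G}}(\overline{m})$.

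Next I would define a functor $F\colon \Theta^{\natural,\elem}_{/T} \to \groth\uly{T}$ sending each object $\xi\colon \cat{D}_n \to T$ to the pair $(\overline{n}, x_{\xi})$ and each morphism $\phi\colon (\cat{D}_n, \xi) \to (\cat{D}_m, \eta)$ over $T$ to the corresponding morphism $\overline{n} \to \overline{m}$ of $\mathbb{G}$. Commutativity of the triangle $\eta \circ \phi = \xi$ translates, under these Yoneda identifications, exactly to the condition $\uly{T}(\phi)(x_{\eta}) = x_{\xi}$ defining morphisms in the category of elements. Full faithfulness and essential surjectivity of $F$ are then both immediate: the former from the identification of inert morphisms between globes with morphisms of $\mathbb{G}$, the latter from the Yoneda-lemma identification of cells of $\uly{T}$ with inert morphisms from the corresponding globe.

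The only real subtlety, essentially notational, is to be clear that the combinatorial formula defining $\uly{T}$ in the construction genuinely recovers the underlying globular set of the globular sum associated to the tree $T$, so that the Yoneda lemma applies as advertised. This is the content of \cref{remark:sectors-and-cells}, classical in the tree picture of \cite{batanin98:_monoid_globul_categ_as_natur,berger02:_cellul_nerve_higher_categ}.
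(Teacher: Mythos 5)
Your proposal is correct and follows essentially the same route as the paper's proof: both identify an inert map $\cat{D}_{n}\rightarrowtail T$ with an $n$-cell of $T$ (a sector of the tree, hence an element of $\uly{T}(n)$) using the freeness of the globes, with \cref{remark:sectors-and-cells} supplying the dictionary. Your version is in fact slightly more complete, since you spell out the functor on morphisms and the full-faithfulness check that the paper leaves implicit.
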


\begin{proof}
  We will exhibit an explicit isomorphism between $\uly{T}$ and the
  underlying $\Theta^{\natural}$-graph of the presheaf represented by
  $T$, since then its Grothendieck construction is indeed the
  slice. Consider an object of $\Theta^{\natural,\elem}_{/T}$, given
  by a map $\cat{D}_{i}\rightarrowtail T$, \emph{i.e.} an element of
  $T(\cat{D}_{i})$. Since $\cat{D}_{i}$ is the free $i$-cell, this map
  is uniquely characterised by a choice of an $i$-cell in $T$. In
  terms of the associated trees, $D_{i}$ is a linear tree and so such
  a map is characterised by a choice of a branch at level $i$ and a
  sector around its top point. It follows
  from~\Cref{remark:sectors-and-cells} that these are exactly counted
  by the elements of $\uly{T}(\cat{D}_{i})$.
\end{proof}

\begin{exmp}
  For any elementary $\cat{D}_{\ell}$, the category
  $\Theta^{\natural,\elem}_{/\cat{D}_{\ell}}$ is freely generated by the
  graph
  \begin{equation}
    \label{eq:theta-elem-slice-over-globe}
    \begin{tikzcd}[row sep=small,column sep=tiny]
      & \characmap{C_{\ell}} & \\
      \characmap{C_{\ell-1}^{-}} \arrow[ur] & &
      \characmap{C_{\ell-1}^{+}} \arrow[ul] \\
      {} \arrow[u] \arrow[urr] & & {} \arrow[u]
      \arrow[ull,crossing over] \\
      {} \arrow[u,phantom,"\vdots"description] & & {}
      \arrow[u,phantom,"\vdots"description] \\
      \characmap{C_{1}^{-}} \arrow[u] \arrow[urr] & &
      \characmap{C_{1}^{+}} \arrow[u] \arrow[ull,crossing over] \\
      \characmap{C_{0}^{-}} \arrow[u] \arrow[urr] & &
      \characmap{C_{0}^{+}} \arrow[u] \arrow[ull,crossing over]
    \end{tikzcd}
  \end{equation}
  where we recall that $\cat{D}_{\ell}$ has a unique $\ell$-cell
  $C_{\ell}$ and, for any $0\leq i<\ell$, two $i$-cells $C_{i}^{\pm}$
  serving has source and target for the higher cells, and
  $\characmap{C_{i}^{\pm}}\colon\cat{D}_{i}\to\cat{D}_{\ell}$ denotes
  the (inert) map selecting the corresponding cell. In other words,
  $\Theta^{\natural,\elem}_{/\cat{D}_{\ell}}$ is the free-living
  $\ell$-iterated cospan, so that the category we are ultimately
  interested in, ${\op{\Theta}}^{\natural,\elem}_{\cat{D}_{\ell}/}$,
  which is its opposite, will be the free-living $\ell$-iterated
  span.
\end{exmp}

\begin{lemma}
  \label{lemma:slice-theta-binary-pushout}
  Let $T$ be a globular sum of the form
  $\cat{D}_{m}\amalg_{\cat{D}_{\ell}}\cat{D}_{n}$. Then
  $\Theta^{\natural,\inrt}_{/T}$ is the strict pushout of
  $1$-categories
  $\Theta^{\natural,\inrt}_{/\cat{D}_{m}}\amalg_{\Theta^{\natural,\inrt}_{/\cat{D}_{\ell}}}
  \Theta^{\natural,\inrt}_{/\cat{D}_{n}}$
\end{lemma}

\begin{proof}
  Let us call $E_{i}^{\varepsilon}$ the cells of $T$ in $\cat{D}_{m}$,
  $F_{i}^{\varepsilon}$ those in $\cat{D}_{n}$, and
  $C_{i}^{\varepsilon}$ those in $\cat{D}_{\ell}$, so that we have
  $E_{i}^{\varepsilon}=C_{i}^{\varepsilon}=F_{i}^{\varepsilon}$ for
  $i<\ell$ and $E_{\ell}^{+}=C_{\ell}=F_{\ell}^{-}$. The matter is then
  that of enumerating the cells and their relations, for which no
  listing can be as clear as simply drawing a generating graph:
  \begin{equation}
    \label{eq:theta-elem-slice-pushout}
    \begin{tikzcd}[row sep=small,column sep=tiny]
      & E_{m} & & & & F_{n} & \\
      E_{m-1}^{-} \arrow[ur] & & E_{m-1}^{+} \arrow[ul] & &
      F_{n-1}^{-} \arrow[ur] & & F_{n-1}^{+} \arrow[ul] \\
      {} \arrow[u] \arrow[urr] & & {} \arrow[u] \arrow[ull,crossing
      over] & & {}\arrow[u] \arrow[urr] & & {} \arrow[u]
      \arrow[ull,crossing over] \\
      {} \arrow[u,phantom,"\vdots"description] & & {}
      \arrow[u,phantom,"\vdots"description] & & {}
      \arrow[u,phantom,"\vdots"description] & & {}
      \arrow[u,phantom,"\vdots"description] \\
      E_{\ell+1}^{-} \arrow[u] \arrow[urr] & & E_{\ell+1}^{+}
      \arrow[u] \arrow[ull,crossing over] & & F_{\ell+1}^{-} \arrow[u]
      \arrow[urr] & & F_{\ell+1}^{+} \arrow[u] \arrow[ull,crossing over] \\
      E_{\ell}^{-} \arrow[u] \arrow[urr] & & & C_{\ell} \arrow[ul]
      \arrow[ulll,crossing over] \arrow[ur] \arrow[urrr] & & &
      F_{\ell}^{+} \arrow[u]
      \arrow[ull,crossing over] \\
      & C_{\ell-1}^{-} \arrow[urr] \arrow[ul] \arrow[urrrrr] & & & &
      C_{\ell-1}^{+}
      \arrow[ull,crossing over] \arrow[ur] \arrow[ulllll,crossing
      over] & \\
      & {} \arrow[u] \arrow[urrrr] & & & & {} \arrow[u]
      \arrow[ullll,crossing over] & \\
      & {} \arrow[u,phantom,"\vdots"description] & & & & {}
      \arrow[u,phantom,"\vdots"description] & \\
      & C_{0}^{-} \arrow[u] \arrow[urrrr] & & & & C_{0}^{+}\text{.}
      \arrow[u]
      \arrow[ullll,crossing over] &
    \end{tikzcd}
  \end{equation}
  Since the Grothendieck construction takes colimits of presheaves (of
  sets) to strict colimits of $1$-categories, and our slices are
  categories of elements as
  in~\Cref{lemma:descript-elem-slices-theta}, one can indeed recognise
  in~\cref{eq:theta-elem-slice-pushout} a strict pushout of three
  versions of~\cref{eq:theta-elem-slice-over-globe}.
\end{proof}

\begin{pros}
  The algebraic pattern ${\op{\Theta}}^{\natural}$ is globally
  saturated.
\end{pros}

\begin{proof}
  Again, we can use the decomposition
  $T\simeq\cat{D}_{i_{1}}\amalg_{\cat{D}_{i^{\prime}_{1}}}
  \dots\amalg_{\cat{D}_{i^{\prime}_{p-1}}}\cat{D}_{i_{p}}$ since it is
  cofinal, so that all we have to prove is that
  \begin{equation}
    \label{eq:7}
    \Theta^{\natural,\elem}_{/T}
    \simeq\Theta^{\natural,\elem}_{/\cat{D}_{i_{1}}}
    \mathbin{\mathop{\amalg}\limits_{\Theta^{\natural,\elem}_{/\cat{D}_{i^{\prime}_{1}}}}}
    \dots\mathbin{\mathop{\amalg}_{\Theta^{\natural,\elem}_{/\cat{D}_{i^{\prime}_{p-1}}}}}
    \Theta^{\natural,\elem}_{/\cat{D}_{i_{p}}}\text{.}
  \end{equation}
  To compute this pushout of $(\infty,1)$-categories, we will use the
  Joyal model structure for quasicategories. Letting
  $N_{\bullet}\cat{C}$ denote the nerve of an $(\infty,1)$-category
  $\cat{C}$, it is clear --- since $i^{\prime}_{j\pm1}<i_{j}$ for all
  $j$ in the decomposition --- that the maps of quasicategories
  $N_{\bullet}\Theta^{\natural,\elem}_{/\cat{D}_{i^{\prime}_{j\pm1}}}\to
  N_{\bullet}\Theta^{\natural,\elem}_{/\cat{D}_{i_{j}}}$ are injective
  in every degree, \emph{i.e.} cofibrations in the Joyal model
  structure, so that the pushout will coincide with the pushout of
  $1$-categories. The result for this strict pushout is then
  established \emph{via}~\Cref{lemma:slice-theta-binary-pushout}.
\end{proof}

\section{Generalised spans}
\label{sec:gener-mult}

For this~\namecref{sec:gener-mult}, we fix an algebraic pattern
$\cat{P}$ and a $\cat{P}$-complete $(\infty,1)$-category $\cat{C}$. We
will adapt to $\cat{P}$ the constructions and arguments
of~\cite[\S{}5]{haugseng18:_iterat}.

Recall that
$\cat{Ar}(\cat{P})\coloneqq\funcs{\mathbb{2}}{P}
\xrightarrow{\ev_{0}}\funcs{\mathbb{1}}{P}\simeq\cat{P}$ is a
cartesian fibration classifying the $\infty$-functor
$\cat{P}_{-/}\colon\op{\cat{P}}\to\infcats$. We let
$\cat{Ar}_{\inrt}(\cat{P})$ be the full sub-$(\infty,1)$-category of
$\cat{Ar}(\cat{P})$ on the inert arrows --- which, by the dual
of~\cite[Proposition 2.2.2]{barkan22:_envel_algeb_patter}, still
defines a cartesian fibration.

Our first goal is to show that
$\cat{Ar}_{\inrt}(\cat{P})\xrightarrow{\ev_{0}}\cat{P}$ classifies an
$\infty$-functor $\op{\cat{P}}\to\infcats$ whose action on objects is
$P\mapsto\cat{P}^{\inrt}_{P/}$.

\begin{construct}
  Since the factorisation system of $\cat{P}$ is functorial,
  projection onto the inert part of an arrow defines a functor
  $\func{inrt}\colon\funcs{\mathbb{2}}{P}
  \to\funcs{\mathbb{3}}{P}\to\funcs{\mathbb{2}}{P}$, which preserves
  the image of $\ev_{0}$ so defines a morphism of categories over
  $\funcs{\mathbb{1}}{P}$ (but not of cartesian fibrations over
  $\cat{P}$, as it does not preserve cartesian lifts of non-inert
  morphisms). We let $\func{inrt}(\cat{Ar}(\cat{P}))$ denote its
  essential image, whose objects are then the inert arrows of
  $\cat{P}$ while morphisms are the squares all of whose edges are
  inert --- so that, in particular, the fibre of
  $\restr{\ev_{0}}_{\func{inrt}(\cat{Ar}(\cat{P}))}$ at $P\in\cat{P}$
  is $(\cat{P}^{\inrt})_{P/}=\cat{P}^{\inrt}_{P/}$.
\end{construct}

\begin{lemma}
  \label{lemma:inert-transport}
  Consider a commuting triangle of inert arrows below-left
  \begin{equation}
    \label{eq:factoris-in-slice}
    \begin{tikzcd}[row sep=tiny,column sep=large]
      & Q \arrow[dd,tail,"h"] \\
      P \arrow[ur,tail,"g"] \arrow[dr,tail,"g^{\prime}"'] & \\
      & Q^{\prime}
    \end{tikzcd}\qquad\qquad
    \begin{tikzcd}[column sep=large]
      O \arrow[d,equals] \arrow[r,tail,"\func{inrt}(gf)"']
      \arrow[rr,bend left,"\Sigma^{f}g=gf"] & M
      \arrow[r,rightsquigarrow] \arrow[d,"\func{inrt}(\Sigma^{f}h)"] &
      Q
      \arrow[d,tail,"h"] \\
      O \arrow[r,tail,"\func{inrt}(g^{\prime}f)"] \arrow[rr,bend
      right,"\Sigma^{f}g^{\prime}=hgf"'] & M^{\prime}
      \arrow[r,rightsquigarrow] & Q^{\prime}
    \end{tikzcd}
  \end{equation}
  defining a morphism in $\cat{P}^{\inrt}_{P/}$, and let
  $O\xrightarrow{f}P$ be any arrow of $\cat{P}$, with inert--active
  factorisation of $\Sigma^{f}h$ as above-right. Then
  $\func{inrt}(\Sigma^{f}h)$ is inert.
\end{lemma}

\begin{proof}
  This is a direct application of the left-cancellability property for
  the left class of an orthogonal factorisation system (see for
  example~\cite[Proposition 5.2.8.6. (4)]{lurie09:_higher}
  or~\cite[Proposition 4.1.2.12]{loubaton23:_theor}).
\end{proof}

\begin{corlr}
  The projection $\func{inrt}(\cat{Ar}(\cat{P}))\to\cat{P}$ is a
  cartesian fibration, and coincides with
  $\cat{Ar}_{\mathrm{inrt}}(\cat{P})\to\cat{P}$. \qed{}
\end{corlr}

We thus obtain an $\infty$-functor
$\cat{P}^{\inrt}_{-/}\colon\op{\cat{P}}\to\infcats$ (whose restriction
to $\op{(\cat{P}^{\inrt})}$ is the co-internalisation of
$\cat{P}^{\inrt}$).

\begin{definition}
  We denote
  $\overline{\func{p}}\colon\overline{\cat{Span}}_{\cat{P}}(\cat{C})
  \to\cat{P}$ the cocartesian fibration classifying the
  $\infty$-functor
  $\funcs{P^{\inrt}_{-/}}{C}\colon\cat{P}\to\infcats$.
\end{definition}

Recall that by~\cite[Proposition 2.37]{barkan22:_arity_approx_operad},
for any $P\in\cat{P}$ there is an algebraic pattern structure on the
slice $\cat{P}_{P/}$, where an object (resp. an arrow) is elementary
(resp. inert, resp. active) if and only if its image by $\ev_{1}$ is
so in $\cat{P}$. Furthermore, by~\cite[Proposition 2.14 and
Proposition 2.4]{kositsyn21:_compl}, it restricts to an algebraic
pattern structure on $\cat{P}^{\inrt}_{P/}$ (which has no non-trivial
active morphisms).

\begin{definition}
  We call $\cat{Span}_{\cat{P}}(\cat{C})$ the full
  sub-$(\infty,1)$-category of
  $\overline{\cat{Span}}_{\cat{P}}(\cat{C})$ on the objects
  $(P,\func{F}\colon\cat{P}^{\inrt}_{P/}\to\cat{C})$ such that
  $\func{F}$ is a Segal $\cat{P}^{\inrt}_{P/}$-object.
\end{definition}

\begin{remark}
  An alternate construction of $\cat{Span}_{\cat{P}}(\cat{C})$ is
  provided by~\cite[Corollary 2.16]{kositsyn21:_compl}.
  
  We let
  $\func{i}^{\inrt}_{P}\colon\cat{P}^{\inrt}_{P/}\to\cat{P}_{P/}$
  denote the canonical inclusion (induced under slicing by
  $\cat{P}^{\inrt}\hookrightarrow\cat{P}$). By~\cite[Proposition
  2.15]{kositsyn21:_compl} (which is formulated in the case of
  $\cat{P}={\op{\bbDelta}}^{\natural}$ but only relies on the
  factorisation system), for any arrow $f\colon O\to P$ in $\cat{P}$,
  the induced $\infty$-functor
  $\Sigma^{f,\ast}\colon\funcs{P_{\mathnormal{O}/}}{C}
  \to\funcs{P_{\mathnormal{P/}}}{C}$ sends the image of
  $\func{i}^{\inrt}_{O,!}$ into the image of $\func{i}^{\inrt}_{P,!}$.

  We can then let
  $\overline{\cat{PreSpan}}_{\cat{P}}(\cat{C})\to\cat{P}$ denote the
  Grothendieck construction of the $\infty$-functor
  $\funcs{P_{-/}}{C}\colon\cat{C}\to\infcats$, and
  $\overline{\cat{Span}}_{\cat{P}}(\cat{C})$ is the full
  sub-$(\infty,1)$-category of
  $\overline{\cat{PreSpan}}_{\cat{P}}(\cat{C})$ on those objects
  $(P,\func{F}\colon\cat{P}_{P/}\to\cat{C})$ such that $\func{F}$ is
  in the image of $\func{i}^{\inrt}_{P,!}$ (so that it is determined
  by its restriction $\cat{P}^{\inrt}_{P/}\to\cat{C}$).
\end{remark}

\begin{lemma}
  \label{lemma:spans-are-cocart-fib}
  Assume the algebraic pattern $\cat{P}$ is sound. The restricted
  projection
  $\func{p}\colon\cat{Span}_{\cat{P}}(\cat{C})
  \hookrightarrow\overline{\cat{Span}}_{\cat{P}}(\cat{C})
  \xrightarrow{\overline{\func{p}}}\cat{P}$ is a cocartesian
  fibration.
\end{lemma}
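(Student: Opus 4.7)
The plan is to show that the cocartesian lifts of $\overline{\func{p}}$ from objects of $\cat{Span}_{\cat{P}}(\cat{C})$ stay inside the full sub-$(\infty,1)$-category $\cat{Span}_{\cat{P}}(\cat{C})$; since the inclusion is full, this will automatically endow $\func{p}$ with a cocartesian fibration structure. Unwinding the Grothendieck construction, the cocartesian lift of $f\colon P\to Q$ starting at $(P,\func{F})$ lands at $(Q,\func{F}\circ\Sigma^{f})$, using the functor $\Sigma^{f}\colon\cat{P}^{\inrt}_{Q/}\to\cat{P}^{\inrt}_{P/}$ classified by the cartesian fibration $\cat{Ar}_{\inrt}(\cat{P})\to\cat{P}$. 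The entire proof therefore reduces to the claim that $\func{F}\circ\Sigma^{f}$ is a Segal $\cat{P}^{\inrt}_{Q/}$-object whenever $\func{F}$ is a Segal $\cat{P}^{\inrt}_{P/}$-object.

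I would then factor $f=a\circ i$ via the orthogonal factorisation system, yielding $\Sigma^{f}\simeq\Sigma^{i}\circ\Sigma^{a}$ (with the required coherence provided by functoriality of the factorisation), and handle the inert and active cases separately.

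The inert case is essentially formal. For $i\colon P\rightarrowtail Q$, the functor $\Sigma^{i}$ is honest precomposition by $i$, sending $(Q\rightarrowtail R)$ to $(P\rightarrowtail R)$. It therefore preserves elementary objects and, for every $(Q\rightarrowtail R)$, induces a canonical equivalence $(\cat{P}^{\inrt}_{Q/})^{\elem}_{(Q\rightarrowtail R)/}\simeq\cat{P}^{\elem}_{R/}\simeq(\cat{P}^{\inrt}_{P/})^{\elem}_{(P\rightarrowtail R)/}$ between elementary under-slices. The Segal equivalence for $\func{F}\circ\Sigma^{i}$ at $(Q\rightarrowtail R)$ then coincides with the Segal equivalence for $\func{F}$ at $(P\rightarrowtail R)$.

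The active case is the main obstacle, because now $\Sigma^{a}$ no longer sends elementaries to elementaries: one has $\Sigma^{a}(Q\rightarrowtail R)=(P\rightarrowtail\tilde{R})$, where $P\rightarrowtail\tilde{R}\rightsquigarrow R$ is the active--inert factorisation of the composite, and $\tilde{R}$ is in general not elementary even when $R$ is. The plan here is to apply the Segal condition for $\func{F}$ on both sides of the desired equivalence, rewriting each as a limit of values of $\func{F}$ on inerts into elementaries, and then reduce the identity to a cofinality statement: concretely, the iterated limit $\lim_{R\rightarrowtail E\in\cat{P}^{\elem}_{R/}}\lim_{\tilde{E}\rightarrowtail E'\in\cat{P}^{\elem}_{\tilde{E}/}}\func{F}(P\rightarrowtail E')$ (with $\tilde{E}$ the inert part of $\tilde{R}\to E$) must agree with the single limit $\lim_{\tilde{R}\rightarrowtail E''\in\cat{P}^{\elem}_{\tilde{R}/}}\func{F}(P\rightarrowtail E'')$. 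This should follow from showing that the natural forgetful functor between the two indexing categories is cofinal, an argument underpinned by the left-cancellability of the inert class used in the preceding lemma (which ensures that the composite $\tilde{R}\rightarrowtail\tilde{E}\rightarrowtail E'$ is itself inert and is the only such factorisation). Verifying this cofinality is the crucial technical step that closes the proof.
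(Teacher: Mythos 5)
Your opening reduction is exactly the one the paper makes: because the inclusion is full, it suffices to check that the $\overline{\func{p}}$-cocartesian pushforward $\func{F}\mapsto\func{F}\circ\Sigma^{f}$ preserves the Segal condition. At that point the paper stops and invokes \cite[Corollaries 2.40 and 2.41]{barkan22:_arity_approx_operad}, which are precisely the statement that precomposition with the codependent coproduct preserves Segal objects; you instead set out to prove this by hand, and that is where the gap lies. Your inert case is fine: $\Sigma^{i}$ is honest precomposition, preserves elementaries, and identifies the relevant elementary under-slices, so the two Segal conditions literally coincide.

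In the active case, however, the entire content of the lemma is concentrated in the claim that the comparison functor from the total category of the Grothendieck construction of $E\mapsto\cat{P}^{\elem}_{\tilde{E}/}$ over $\cat{P}^{\elem}_{R/}$ to $\cat{P}^{\elem}_{\tilde{R}/}$ is initial (limit-cofinal), and you do not prove it. The justification you offer --- left-cancellability of the inert class --- only yields that $\tilde{R}\rightarrowtail\tilde{E}$ is inert, i.e.\ that the comparison functor is \emph{well defined}; it says nothing about the weak contractibility of the comma categories over each object $(\tilde{R}\rightarrowtail E'')$ that initiality demands, nor even about their non-emptiness (why should every elementary under $\tilde{R}$ admit a map from one lying under some $\tilde{E}$?). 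For a general algebraic pattern this is a genuine assertion about how the factorisation system interacts with the elementary objects, not a formal consequence of orthogonality, and it is exactly what the cited corollaries supply. So either carry out the cofinality verification explicitly or fall back on the citation; as written, the proof is incomplete at precisely the step you yourself flag as crucial.
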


\begin{proof}
  As explained in the proof of~\cite[Corollary
  5.12]{haugseng18:_iterat}, since $\cat{Span}_{\cat{P}}(\cat{C})$ is
  a full sub-$(\infty,1)$-category of
  $\overline{\cat{Span}}_{\cat{P}}(\cat{C})$, all we need to do is
  check that if $(P,\func{F})\to(Q,\func{G})$ is a
  $\overline{\func{p}}$-cocartesian morphism in
  $\overline{\cat{Span}}_{\cat{P}}(\cat{C})$ such that $\func{F}$ is
  Segal, then $\func{G}$ is Segal as well. Note also that such a
  cocartesian morphism consists of an arrow $f\colon P\to Q$ in
  $\cat{P}$ with
  $\func{G}\simeq\Sigma^{f,\ast}\func{F}=\func{F}\circ(\Sigma^{f})$:
  in other words, we must show that Segal objects are preserved by
  composition with codependent coproduct. That is, if
  $\func{F}\colon\cat{P}^{\inrt}_{P/}\to\cat{C}$ satisfies the Segal
  condition, then for every $g\colon Q\rightarrowtail Q^{\prime}$ we
  must have
  \begin{equation}
    \label{eq:segal-inert-pullback}
    \func{F}\bigl(\func{inrt}(P\xrightarrow{f}Q
    \overset{g}{\rightarrowtail}Q^{\prime})\bigr)
    \xrightarrow{\simeq}
    \lim_{(Q\overset{hg}{\rightarrowtail}E)\in(\cat{P}^{\inrt}_{Q/})^{\elem}_{g/}}
    \func{F}\bigl(\func{inrt}(
    P\xrightarrow{f}Q\overset{hg}{\rightarrowtail}
    E)\bigr)\text{.}
  \end{equation}

  By the functoriality of the construction $\Sigma^{(-),\ast}$ and the
  fact that active and inert morphisms for a factorisation system, we
  only need to check the comparison in the cases where $f$ is either
  purely inert or purely active. If $f$ is inert, then $\func{inrt}$
  acts as the identity on the composition, and the functor
  $\Sigma^{f,\ast}$ is even iso-Segal (since both
  $(\cat{P}^{\inrt}_{Q/})^{\elem}_{g/}$ and
  $(\cat{P}^{\inrt}_{P/})^{\elem}_{gf/}$ are then equivalent to
  $\cat{P}^{\elem}_{Q^{\prime}/}$ thanks to all maps being inert),
  which is strictly stronger than preserving Segal objects. The case
  of $f$ being active is where the soundness assumption comes into
  play.
  
  Write
  $\begin{tikzcd} P \arrow[r,tail,"{\func{inrt}(hgf)}"] & M_{hg}
    \arrow[r,rightsquigarrow] & E
  \end{tikzcd}$ the inert-active factorisation of the composition
  $hgf\colon P\rightsquigarrow Q\rightarrowtail E$, for any
  $Q\rightarrowtail E$ in $(\cat{P}^{\inrt}_{Q/})^{\elem}_{g/}$ given
  by $h\colon Q^{\prime}\rightarrowtail E$. Then, since $\func{F}$ is
  Segal, the right-hand side limit in~\cref{eq:segal-inert-pullback}
  becomes a double limit
  \begin{equation}
    \label{eq:dbl-decomp-segal-inrt}
    \lim_{(Q\overset{hg}{\rightarrowtail}E)\in(\cat{P}^{\inrt}_{Q/})^{\elem}_{g/}}
    \lim_{(M_{hg}\rightarrowtail E^{\prime})\in(\cat{P}^{\inrt}_{P/})^{\elem}_{M_{hg}/}}
    \func{F}(P\rightarrowtail M_{hg}\rightarrowtail E^{\prime})\text{,}
  \end{equation}
  summarised by the dashed arrows in the diagram
  \begin{equation}
    \label{eq:dbl-lim-indexing}
    \begin{tikzcd}
      P \arrow[d,rightsquigarrow,"f"'] \arrow[r,tail] & M_{g}
      \arrow[d,rightsquigarrow] \arrow[r,tail,"\func{inrt}(h)"] &
      M_{hg}
      \arrow[d,rightsquigarrow] \arrow[r,tail,dashed] &
      E^{\prime}\text{,} \\
      Q \arrow[r,tail,"g"'] & Q^{\prime} \arrow[r,tail,"h"'] & E &
    \end{tikzcd}
  \end{equation}
  where the inert transport arrow $\func{inrt}(h)$ comes
  from~\cref{lemma:inert-transport}. In particular, we can see that
  $(\cat{P}^{\inrt}_{Q/})^{\elem}_{g/}
  \simeq\cat{P}^{\elem}_{Q^{\prime}/}$ and
  $(\cat{P}^{\inrt}_{P/})^{\elem}_{M_{hg}/}
  \simeq\cat{P}^{\elem}_{M_{hg}/}$.

  Then, as explained in~\cite[Observation
  3.3.6]{barkan22:_envel_algeb_patter}, soundness of $\cat{P}$ allows
  this double limit to be computed as a limit over
  $(M_{g}\rightarrowtail E^{\prime})\in\cat{P}^{\elem}_{M_{g}/}$. But
  this is precisely what we obtain from the Segal decomposition for
  $\func{F}(P\rightarrowtail M_{g})$ in the left-hand side term
  of~\cref{eq:segal-inert-pullback}.
\end{proof}

\begin{pros}
  Let $\cat{P}$ be a sound algebraic pattern. The cocartesian
  fibration $\func{p}\colon\cat{Span}_{\cat{P}}(\cat{C})\to\cat{P}$ is
  a Segal fibration, that is the $\infty$-functor
  $\internalcat{Span_{\cat{P}}(\cat{C})} \colon\cat{P}\to\infcats$ it
  classifies defines a $\cat{P}$-monoidal $(\infty,1)$-category.
\end{pros}

\begin{proof}
  To make the definition explicit, we need to show that for any
  $P\in\cat{P}$,
  \begin{equation}
    \label{eq:segal-cond-span}
    \seg{\cat{P}^{\inrt}_{P/}}(\cat{C})
    \to\lim_{E\in\cat{P}^{\elem}_{P/}}\seg{\cat{P}^{\inrt}_{E/}}(\cat{C})
  \end{equation}
  is an equivalence. Since $\cat{P}^{\inrt}_{P/}$ only has inert
  morphisms, the right Kan extension $\infty$-functor
  \begin{equation}
    \label{eq:seg-inrt-equal-fctr-on-el}
    \seg{\cat{P}^{\elem}_{P/}}(\cat{C})
    \simeq\funcs{P^{\elem}_{\mathnormal{P/}}}{C}
    \to\seg{\cat{P}^{\inrt}_{P/}}(\cat{C})
  \end{equation}
  is an equivalence. Similarly, every factor
  $\seg{\cat{P}^{\inrt}_{E/}}(\cat{C})$ in~\cref{eq:segal-cond-span}
  is equivalent to $\funcs{P^{\elem}_{\mathnormal{E/}}}{C}$, and so
  the map of~\cref{eq:segal-cond-span} takes the form
  \begin{equation}
    \label{eq:segal-cond-span-unsegald}
    \funcs{P^{\elem}_{\mathnormal{P/}}}{C}
    \to\lim_{E\in\cat{P}^{\elem}_{P/}}\funcs{P^{\elem}_{\mathnormal{E/}}}{C}\text{.}
  \end{equation}
  By the property of global saturation from~\cref{pros:globl-satur},
  and since enriched homs (or cotensors) send colimits in the first
  variable to limits, this map is an equivalence.
\end{proof}

\section{$\cat{P}$-Monads in $\cat{P}$-spans}
\label{sec:monads-spans}

This~\namecref{sec:monads-spans} will follow very closely the
structure of~\cite[\S{}3]{haugseng21:_segal}.

\begin{lemma}
  The $\infty$-functor
  $\cat{Ar}_{\inrt}(\cat{P})\xrightarrow{\ev_{1}}\cat{P}$ admits a
  right adjoint right inverse.
\end{lemma}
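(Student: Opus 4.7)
The plan is to exhibit the right adjoint right inverse explicitly as the ``constant-diagram'' section $s \colon \cat{P} \to \cat{Ar}_{\inrt}(\cat{P})$ sending each object $P$ to the identity arrow $\id_{P}$, viewed as an object of $\cat{Ar}_{\inrt}(\cat{P})$. This is legitimate because every equivalence is inert --- the left class of an orthogonal factorisation system always contains the equivalences --- so identities lie in $\cat{P}^{\inrt}$.

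More precisely, I would construct $s$ by first observing that precomposition with the unique functor $\mathbb{2} \to \mathbb{1}$ produces an $\infty$-functor
\[
\overline{s} \colon \cat{P} \simeq \funcs{\mathbb{1}}{P} \to \funcs{\mathbb{2}}{P} = \cat{Ar}(\cat{P}),
\]
which is a right adjoint to $\ev_{1} \colon \cat{Ar}(\cat{P}) \to \cat{P}$ by a standard computation: a morphism $f \to \overline{s}(P)$ out of $f \colon Q \to Q'$ in $\cat{Ar}(\cat{P})$ is a commuting square whose right edge is $\id_{P}$, hence is determined freely by its bottom edge $Q' \to P$. Since the essential image of $\overline{s}$ consists of identities --- which are in particular inert --- the functor $\overline{s}$ corestricts along the fully faithful embedding $\cat{Ar}_{\inrt}(\cat{P}) \hookrightarrow \cat{Ar}(\cat{P})$ to the sought $s \colon \cat{P} \to \cat{Ar}_{\inrt}(\cat{P})$.

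It remains to check that $s$ has both required properties with respect to the restricted $\ev_{1} \colon \cat{Ar}_{\inrt}(\cat{P}) \to \cat{P}$. The right-inverse identity $\ev_{1} \circ s \simeq \id_{\cat{P}}$ is immediate from the construction. For the adjunction, fully faithfulness of $\cat{Ar}_{\inrt}(\cat{P}) \hookrightarrow \cat{Ar}(\cat{P})$ furnishes, for any inert $i \colon Q \rightarrowtail Q'$ and any $P \in \cat{P}$, natural equivalences
\[
\cat{Ar}_{\inrt}(\cat{P})(i, s(P)) \simeq \cat{Ar}(\cat{P})(i, \overline{s}(P)) \simeq \cat{P}(Q', P) = \cat{P}(\ev_{1}(i), P),
\]
identifying $s$ as the desired right adjoint. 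There is essentially no hard step: the only point to verify is that the unit of the ambient adjunction at an inert $i \colon Q \rightarrowtail Q'$ --- namely the square whose top and left edges are $i$ and whose right and bottom edges are $\id_{Q'}$ --- is a morphism in the full sub-$(\infty,1)$-category $\cat{Ar}_{\inrt}(\cat{P})$, which holds because both its source $i$ and target $\id_{Q'}$ are inert.
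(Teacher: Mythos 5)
Your proposal is correct and follows essentially the same route as the paper: both identify the right adjoint right inverse as the section $P\mapsto\id_{P}$ induced by $!_{\mathbb{2}}\colon\mathbb{2}\to\mathbb{1}$, observe that identities are inert so that it lands in the full sub-$(\infty,1)$-category $\cat{Ar}_{\inrt}(\cat{P})$, and use fullness to restrict the adjunction. The only (immaterial) difference is that the paper transfers the adjunction $!_{\mathbb{2}}\dashv\characmap{1}$ through the $(\infty,2)$-functoriality of cotensoring with $\cat{P}$, whereas you verify the adjunction directly on mapping spaces.
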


\begin{proof}
  The functor $\characmap{1}\colon\mathbb{1}\to\mathbb{2}$ has a
  retraction $\mathbb{2}\xrightarrow{!_{\mathbb{2}}}\mathbb{1}$, which
  upgrades in fact to a left adjoint left inverse: we clearly have
  $!_{\mathbb{2}}\circ\characmap{1}=!_{\mathbb{1}}=\id_{\mathbb{1}}$,
  while there is a (unique, since $\mathbb{2}$ is posetal) natural
  transformation
  $\id_{\mathbb{2}}\Rightarrow\characmap{1}\circ!_{\mathbb{2}}=\const_{1}$,
  and it is easily checked (by unicity of $!$) that these two
  transformations satisfy the triangle identities.

  Now note that
  $\ev_{1}\colon\cat{Ar}(\cat{P})=\funcs{\mathbb{2}}{P}
  \to\funcs{\mathbb{1}}{P}$ is exactly given by
  $\funcs{\characmap{\mathnormal{1}}}{P}$, and so, as powering with
  $\cat{P}$ is $(\infty,2)$-functorial (that is, as an
  $\infty$-functor $\funcs{(-)}{P}\colon\op{\infcats}\to\infcats$, it
  is $\infcats$-linear, and so upgrades to an $(\infty,2)$-functor),
  it has a right adjoint right inverse given by
  $\funcs{!_{\mathbb{2}}}{P}$. The latter $\infty$-functor can be
  described very explicitly: it maps an object $P\in\cat{P}$ to its
  identity arrow $\id_{P}\in\cat{Ar}(\cat{P})$.

  In particular, it factors through $\cat{Ar}_{\inrt}(\cat{P})$ --- as
  identity arrows are inert --- and since this
  sub-$(\infty,1)$-category of $\cat{Ar}(\cat{P})$ is full, the
  astriction of $\funcs{!_{\mathbb{2}}}{P}$ to it furnishes the
  desired right adjoint right inverse to
  $\cat{Ar}_{\inrt}(\cat{P})\xrightarrow{\ev_{1}}\cat{P}$.
\end{proof}

Given its description, we will denote
$\characmap{\id}\colon\cat{P}\to\cat{Ar}_{\inrt}(\cat{P})$ the right
adjoint right inverse to $\ev_{1}$. The unit will simply be known as
$\eta\colon\id_{\cat{Ar}_{\inrt}(\cat{P})}\Rightarrow\characmap{\id}\circ\ev_{1}$;
its component at $(P\rightarrowtail Q)\in\cat{Ar}_{\inrt}(\cat{P})$ is
the square
\begin{equation}
  \label{eq:unit-component}
  \eta_{(P\rightarrowtail Q)}\colon\begin{tikzcd}
    P \arrow[d,tail] \arrow[r,tail] & Q \arrow[d,equals] \\
    Q \arrow[r,equals] & Q\text{.}
  \end{tikzcd}
\end{equation}

\begin{pros}
  \label{pros:localis-base}
  The $\infty$-functor
  $\cat{Ar}_{\inrt}(\cat{P})\xrightarrow{\ev_{1}}\cat{P}$ exhibits
  $\cat{P}$ as the localisation of $\cat{Ar}_{\inrt}(\cat{P})$ at the
  set $\mathscr{I}$ of $\ev_{0}$-cartesian morphisms lying over inert
  arrows of $\cat{P}$.
\end{pros}

\begin{proof}
  Let $\mathscr{W}$ be the set of morphisms in
  $\cat{Ar}_{\inrt}(\cat{P})$ inverted by $\ev_{1}$;
  by~\cite[Corollary 2.4.7.11 and Lemma 2.4.7.]{lurie09:_higher}
  (\emph{cf.} also~\cite[Proposition
  2.2.2.(2)]{barkan22:_envel_algeb_patter}), $W$ consists exactly of
  the $\ev_{0}$-cartesian morphisms, so that we do have
  $\mathscr{I}\subset\mathscr{W}$. If
  $(f,g)\colon(P\rightarrowtail Q) \to(P^{\prime}\rightarrowtail
  Q^{\prime})$ is a morphism in $\cat{Ar}_{\inrt}(\cat{P})$ lifting
  $f\colon P\to P^{\prime}$ in $\cat{P}$, it is in $\mathscr{W}$ if
  and only if $g\colon Q\to Q^{\prime}$ is an equivalence so that we
  have a commutative square
  \begin{equation}
    \label{eq:invert-saturated-cart-inrt}
    \begin{tikzcd}
      (P\rightarrowtail Q) \arrow[r,"{(f,g)}"] \arrow[d] &
      (P^{\prime}\rightarrowtail Q^{\prime}) \arrow[d] \\
      (Q=Q) \arrow[r,"\simeq"] & (Q^{\prime}=Q^{\prime})
    \end{tikzcd}
  \end{equation}
  in which the two vertical morphisms are in $\mathscr{I}$. Any
  $\infty$-functor from $\cat{Ar}_{\inrt}(\cat{P})$ to some
  $(\infty,1)$-category $\cat{C}$ inverting the morphisms in
  $\mathscr{I}$ will then send the square
  of~\cref{eq:invert-saturated-cart-inrt} to a square whose veritcal
  arrows (in addition to the lower horizontal one) are equivalences,
  whence its upper horizontal is one as well since equivalences always
  satisfy the 2-of-3 property. This means that such an
  $\infty$-functor automatically inverts all the morphisms in
  $\mathscr{W}$, and we only need to show that $\ev_{1}$ is a
  localisation, along $\mathscr{W}$. This follows readily from the
  fact that it has a right adjoint right inverse (in fact it is
  equivalent to it), but in our specific situation it can be seen in a
  more explicit way.

  Let $\cat{C}$ be again any $(\infty,1)$-category and let us consider
  the comparison $\infty$-functor
  $\funcs{\mathnormal{\ev_{1}}}{C}\colon\funcs{P}{C}
  \to\funcs{Ar_{\inrt}(P)}{C}_{(\mathscr{W})}$, where the target
  denotes the full sub-$(\infty,1)$-category of
  $\funcs{Ar_{\inrt}(P)}{C}$ on the $\infty$-functors inverting the
  morphisms in $\mathscr{W}$ (through which
  $\funcs{\mathnormal{\ev_{1}}}{C}$ does factor by definition of
  $\mathscr{W}$). The crux of the matter is that the components of the
  unit transformation $\eta$ all belong to $\mathscr{I}$ --- as can be
  seen in~\cref{eq:unit-component} --- and so \emph{a fortiori} to
  $\mathscr{W}$. Hence, the adjunction
  $\funcs{\mathnormal{\ev_{1}}}{C}
  \dashv\funcs{\mathnormal{\characmap{\id}}}{C}$ restricts on
  $\funcs{Ar_{\inrt}(P)}{C}_{(\mathscr{W})}$ to an equivalence (as its
  counit was already an identity, and its unit becomes one after this
  restriction), which means that $\ev_{1}$ is a localisation along
  $\mathscr{W}$.
\end{proof}

\begin{construct}
  Let $\func{f}\colon\cat{X}\to\cat{P}$ be an $\infty$-functor such
  that $\cat{X}$ admits $\func{f}$-cocartesian lifts of inert
  morphisms. Consider the solid pullback
  \begin{equation}
    \label{eq:hackerman}
    \begin{tikzcd}
      & \cat{Ar}_{\inrt}(\cat{P}) \arrow[dddl,bend left,"\ev_{0}"{near
        end},""{name=evoback,above}] & &
      \cat{X}\lmtimes_{\cat{P}}\cat{Ar}_{\inrt}(\cat{P})%
      \arrow[dddl,bend left,"\func{f}^{\ast}\ev_{0}"]%
      \arrow[ll,"\ev_{0}^{\ast}\func{f}"'] \\
      \cat{Ar}_{\inrt}(\cat{P})%
      \arrow[ur,bend left,dashed,start anchor={[xshift=2ex]north
        west},"\characmap{\id}\circ\ev_{1}",""{name=monad,below}]%
      \arrow[ur,bend right,start anchor={[xshift=1ex,yshift=-1ex]north
        east},"\id"',""{name=idAr,above}]%
      \arrow[from=idAr,to=monad,dashed,Rightarrow,shorten=6,"\eta"']%
      \arrow[dd,start anchor={[xshift=2ex]south west},bend
      right=5,"\ev_{0}"',""{name=evo,near end}]%
      \arrow[from=evo,to=evoback,dashed,Rightarrow,bend
      left=42,"\ev_{0}\eta"']%
      \arrow[from=rr,crossing over,"\ev_{0}^{\ast}\func{f}"{near
        start}]%
      & & \cat{X}\lmtimes_{\cat{P}}\cat{Ar}_{\inrt}(\cat{P})%
      \arrow[dd,"\func{f}^{\ast}\ev_{0}"']%
      \arrow[ddll,phantom,very near start,"\llcorner"]%
      \arrow[ur,bend right,"\func{f}^{\ast}\id=\id"'{very near
        start},""{name=idXAr,above}]%
      \arrow[ur,bend left,dotted,end anchor={[yshift=3ex]south
        west},""{name=liftdar,below}]%
      \arrow[from=idXAr,to=liftdar,Rightarrow,
      dotted,shorten=6,"\eta_{!}"]
      & \\
      & & & \\
      \cat{P} & & \cat{X} \arrow[ll,"\func{f}"] &
    \end{tikzcd}
  \end{equation}
  which is a (strongly) commutative diagram in the
  $(\infty,2)$-category $\infcats$. Adding
  $\characmap{\id}\circ\ev_{1}$, represented as a dashed arrow, the
  induced back-left triangle does not commute; however, adding as well
  the unit cell $\eta$ and its whiskering
  $\ev_{0}\eta\colon\ev_{0}=\ev_{0}\circ\id_{\cat{Ar}_{\inrt}(\cat{P})}
  \Rightarrow\ev_{0}\circ\characmap{\id}\circ\ev_{1}$ we obtain a
  ``2-commutative'' pasting diagram.

  Now as $\func{f}$ admits cocartesian lifts of inert arrows so does
  its base-change $\ev_{0}^{\ast}\func{f}$ (since cocartesian lifts
  are stable by pullback, by the co-dual of~\cite[Proposition
  5.2.4]{riehl22:_elemen}), and so, using the formulation of
  cocartesian lifts from~\cite[Definition 5.4.2]{riehl22:_elemen}, the
  transformation $\eta(\ev_{0}\func{f}^{\ast})$, whose components are
  inert, admits an $\ev_{0}\func{f}^{\ast}$-cocartesian dotted lift
  $\id_{\cat{X}\times_{\cat{P}}\cat{Ar}_{\inrt}(\cat{P})}
  \xRightarrow{\eta_{!}}
  (\id_{\cat{X}\times_{\cat{P}}\cat{Ar}_{\inrt}(\cat{P})})_{\eta}
  \eqqcolon\func{f}^{\eta}(\characmap{\id}\circ\ev_{1})$.

  We can finally define
  \begin{equation}
    \label{eq:def-fast-ev1}
    \func{f}^{\ast}\ev_{1} \coloneqq(\func{f}^{\ast}\ev_{0})
    \circ\func{f}^{\eta}(\characmap{\id}\circ\ev_{1})\text{.}
  \end{equation}  
  Explicitly, $\func{f}^{\ast}\ev_{1}$ sends an object
  $(X,\jmath\colon\func{f}X\rightarrowtail Q)
  \in\cat{X}\times_{\cat{P}}\cat{Ar}_{\inrt}(\cat{P})$ to
  $(X_{Q},Q=Q)$ where $X\xrightarrow{\jmath_{!}}X_{Q}$ is a
  cocartesian lift of $\jmath$. By construction it comes equipped with
  a natural transformation that we will call
  $\func{f}^{\ast}\alpha\coloneqq(\func{f}^{\ast}\ev_{0})\eta_{!}
  \colon\func{f}^{\ast}\ev_{0}\Rightarrow\func{f}^{\ast}\ev_{1}$,
  sitting in the diagram
  \begin{equation}
    \label{eq:lax-basechg-ev1}
    \begin{tikzcd}
      & \cat{P} \arrow[ddl,bend left,very near start,""'{name=idP}] &
      & \cat{X}\times_{\cat{P}}\cat{P} \simeq\cat{X} \arrow[ddl,bend
      left,"\func{f}^{\ast}\id_{\cat{P}}=\id_{\cat{X}}",""'{name=idX,very
        near
        start}] \arrow[ll] \\
      \cat{Ar}_{\inrt}(\cat{P}) \arrow[ur,"\ev_{1}"]
      \arrow[d,"\ev_{0}"',""{name=evo}]
      \arrow[from=evo,to=idP,Rightarrow,near start,"\alpha"']
      \arrow[from=rr,crossing over] & &
      \cat{X}\lmtimes_{\cat{P}}\cat{Ar}_{\inrt}(\cat{P})
      \arrow[d,"\func{f}^{\ast}\ev_{0}"',""{name=fevo}]
      \arrow[from=fevo,to=idX,shorten=10,Rightarrow,near
      start,"\func{f}^{\ast}\alpha"'] \arrow[dll,phantom,very near
      start,"\llcorner"] \arrow[ur,"\func{f}^{\ast}\ev_{1}"] & \\
      \cat{P} & & \cat{X} \arrow[ll,"\func{f}"] &
    \end{tikzcd}
  \end{equation}
  whose front and back squares are cartesian, but whose top square is
  not --- and where the natural transformation $\alpha$ comes from
  cotensoring with $\cat{P}$ the canonical $2$-cell
  $\characmap{0<1} \colon\characmap{0}\Rightarrow\characmap{1}
  \colon\mathbb{1}\to\mathbb{2}$ (in particular, it is easily checked
  that the adjunction $!_{\mathbb{2}}\dashv\characmap{1}$ lives under
  $\mathbb{1}$ so that $\ev_{1}\dashv\characmap{\id}$ lives over
  $\cat{P}$).  Conjecturally, the right face
  of~\cref{eq:lax-basechg-ev1} could be seen in terms of the
  $(\infty,3)$-topos of $(\infty,2)$-categories as the strong base
  change, along $\func{f}$ admitting enough cocartesian lifts, between
  fibrational lax slice $(\infty,2)$-categories, justifying our
  notation, though since the conditions for its construction are
  rather specific we will not pursue this point of view in further
  generality.
\end{construct}

\begin{lemma}
  The $\infty$-functor $\func{f}^{\ast}\ev_{1}$ admits a right adjoint
  right inverse.
\end{lemma}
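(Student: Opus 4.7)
The plan is to construct the right adjoint right inverse as a twisted base-change of the adjunction $\ev_{1}\dashv\characmap{\id}$ from the previous lemma, exploiting the cocartesian lifts of inert arrows provided by $\func{f}$.

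First I would define the candidate $\func{g}\colon\cat{X}\to\cat{X}\times_{\cat{P}}\cat{Ar}_{\inrt}(\cat{P})$ via the pairing $(\id_{\cat{X}},\characmap{\id}\circ\func{f})$, which is well-defined because $\ev_{1}\circ\characmap{\id}\circ\func{f}=\func{f}$, so the pair indeed lands in the pullback. Explicitly, $\func{g}$ sends an object $X\in\cat{X}$ to $(X,\id_{\func{f}X}\colon\func{f}X\rightarrowtail\func{f}X)$. The fact that this is a right \emph{inverse} of $\func{f}^{\ast}\ev_{1}$ is immediate from the explicit description given in~\cref{eq:def-fast-ev1}: the value $\func{f}^{\ast}\ev_{1}\bigl(X,\id_{\func{f}X}\bigr)$ is computed by choosing a cocartesian lift of the identity $\id_{\func{f}X}$ starting at $X$, and any such lift is necessarily an equivalence (indeed, $\id_{X}$ is always $\func{f}$-cocartesian over $\id_{\func{f}X}$).

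Next I would exhibit the right \emph{adjoint} structure. The natural candidate for the unit is the transformation $\eta^{\func{f}}\colon\id\Rightarrow\func{g}\circ\func{f}^{\ast}\ev_{1}$ whose component at $\bigl(X,\jmath\colon\func{f}X\rightarrowtail Q\bigr)$ is given on the $\cat{X}$-factor by the cocartesian lift $\jmath_{!}\colon X\to X_{Q}$ and on the $\cat{Ar}_{\inrt}(\cat{P})$-factor by the square
\begin{equation*}
  \begin{tikzcd}
    \func{f}X \arrow[d,tail,"\jmath"'] \arrow[r,tail,"\jmath"] & Q \arrow[d,equals] \\
    Q \arrow[r,equals] & Q
  \end{tikzcd}
\end{equation*}
which is precisely the $\eta_{(\jmath)}$ of~\cref{eq:unit-component} after base change. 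The counit should be the identity, reflecting that $\func{g}$ is a section. Conceptually, $\eta^{\func{f}}$ is obtained by applying $\func{f}^{\ast}$ (in the sense made precise by the cocartesian lifting construction of the previous display) to the unit $\eta$ of $\ev_{1}\dashv\characmap{\id}$, and the fact that this adjunction lives over $\cat{P}$ (as noted in the construction) is what allows the whole adjoint data to base-change.

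Finally I would check the two triangle identities. The counit-side identity is trivial because the counit is an equivalence. For the unit-side triangle, one must verify that $\func{f}^{\ast}\ev_{1}(\eta^{\func{f}}_{(X,\jmath)})$ is an equivalence in $\cat{X}$: but this value is a cocartesian lift over $\id_{Q}$ starting from $X_{Q}$, hence an equivalence by the same argument as for the right-inverse check. The hard part is phrasing all of this coherently as an $\infty$-categorical adjunction rather than a $1$-categorical one; the cleanest route is to identify $\eta^{\func{f}}$ with the cocartesian lift $\eta_{!}$ already constructed in~\cref{eq:hackerman} and to invoke the universal property of $\ev_{1}^{\ast}\func{f}$-cocartesian transformations to pass the triangle identities for $(\ev_{1}\dashv\characmap{\id})$ through the base change, exactly as in the proof of the analogous statement for $\op{\bbDelta}$ in~\cite[\S{}3]{haugseng21:_segal}.
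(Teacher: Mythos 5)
Your proposal is correct and follows essentially the same route as the paper: your candidate section $(\id_{\cat{X}},\characmap{\id}\circ\func{f})$ is exactly the strong pullback $\func{f}^{\ast}\characmap{\id}$ used there, your unit is the cocartesian lift $\eta_{!}$ of~\cref{eq:hackerman} with identity counit, and the triangle identities are transferred through the universal property of cocartesian lifts just as in the paper. The only difference is one of detail: the paper spells out the identification of the target of $\eta_{!}$ with $\func{f}^{\ast}\characmap{\id}\circ\func{f}^{\ast}\ev_{1}$ (via the counit $\kappa$ of $\characmap{\id}\dashv\ev_{0}$), which you assert by uniqueness of cocartesian lifts — also the paper's first argument.
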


\begin{proof}
  Note that in addition to being a right adjoint right inverse to
  $\ev_{1}$, the map $\characmap{\id}$ is also a left adjoint right
  inverse to $\ev_{0}$. We will denote the counit of this adjunction
  $\kappa$. Since the identity unit exhibits
  $\ev_{0}\circ\characmap{\id}=\id_{\cat{P}}$, the $\infty$-functor
  $\characmap{\id}$ lifts strongly to
  $\func{f}^{\ast}\characmap{\id}\colon\cat{X}
  \to\cat{X}\times_{\cat{P}}\cat{Ar}_{\inrt}(\cat{P})$: the equivalent
  of~\cref{eq:lax-basechg-ev1} with $\ev_{1}$ replaced by
  $\characmap{\id}$ (and $\alpha$ replaced by the identity unit,
  \emph{mutatis mutandis}) is a strongly commutative diagram, and
  fully cartesian. We claim that $\func{f}^{\ast}\characmap{\id}$ is
  the sought-after right adjoint right inverse to
  $\func{f}^{\ast}\ev_{1}$.

  To see this, we will show that the transformation $\eta_{!}$
  constructed in~\cref{eq:hackerman} works as a unit with identity
  counit; it requires first identifying its target
  $\func{f}^{\eta}(\characmap{\id}\circ\ev_{1})$ as
  $\func{f}^{\ast}\characmap{\id}\circ\func{f}^{\ast}\ev_{1}$. This is
  in fact trivial, because the transformations
  $\eta_{!}\colon\id\Rightarrow\func{f}^{\eta}(\characmap{\id}\circ\ev_{1})$
  and
  $(\func{f}^{\ast}\characmap{\id}) (\func{f}^{\ast}\ev_{0})
  \eta_{!}\colon\id\Rightarrow
  \func{f}^{\ast}\characmap{\id}\circ\func{f}^{\ast}\ev_{1}$ are both
  $\func{f}$-cocartesian lifts of
  $\eta\colon\id\Rightarrow\characmap{\id}\circ\ev_{1}$, but there is
  another interesting way of seeing it, that we detail in the next
  paragraph.

  Since the unit of the adjunction $\characmap{\id}\dashv\ev_{0}$ is
  an equivalence, the triangle identities imply that the whiskering
  $\kappa\characmap{\id}$ is the identity transformation of
  $\characmap{\id}$ 
  , and also
  $\kappa\characmap{\id}\ev_{1}\simeq\id_{\characmap{\id}\ev_{1}}$. There
  are now two things we can do: since
  $\id_{\func{f}^{\eta}(\characmap{\id}\ev_{1})}$ is a cocartesian
  lift of $\id_{\characmap{\id}\ev_{1}}$, the transformation
  $\id_{\func{f}^{\eta}(\characmap{\id}\ev_{1})}$ factors through a
  unique lift $\func{f}^{\eta}(\kappa\characmap{\id}\ev_{1})$ of
  $\kappa\characmap{\id}\ev_{1}$, which because of the factorisation
  has to be an identity. At the same time, one can take a cocartesian
  lift of $\kappa\characmap{\id}\ev_{1}$, which is easily seen to
  coincide with $\func{f}^{\eta}(\kappa\characmap{\id}\ev_{1})$; as a
  cocartesian lift of an identity, it is, again, an identity.  We thus
  have an equivalence
  \begin{equation}
    \label{eq:compos-lifts}
    \begin{split}
      (\func{f}^{\ast}\characmap{\id})\circ(\func{f}^{\ast}\ev_{1})
      =(\func{f}^{\ast}\characmap{\id})&\circ(\func{f}^{\ast}\ev_{0})
      \circ\func{f}^{\eta}(\characmap{\id}\circ\ev_{1})\\
      &\xrightarrow[\func{f}^{\eta}(\kappa\characmap{\id}\ev_{1})]{\simeq}
      \func{f}^{\eta}(\characmap{\id}\circ\ev_{1})\text{,}
    \end{split}
  \end{equation}
  expressing the decomposition we needed.
  
  Furthermore, constructing the equivalent of~\cref{eq:hackerman} but
  with $\ev_{1}\circ\characmap{\id}$ in place of $\id_{\cat{P}}$ (so
  with structure map to $\cat{P}$ given by $\id_{\cat{P}}$ instead of
  $\ev_{0}$), and with the identity counit
  $\varepsilon\colon\ev_{1}\circ\characmap{\id}
  \xRightarrow{=}\id_{\cat{P}}$ instead of $\eta$, we obtain, after
  strongly pulling back $\ev_{1}\circ\characmap{\id}$, an
  $\func{f}$-cocartesian transformation
  $\varepsilon_{!}\colon\func{f}^{\ast}(\ev_{1}\circ\characmap{\id})
  \Rightarrow(\func{f}^{\ast}(\ev_{1}\circ\characmap{\id}))_{\varepsilon}
  =\id_{\cat{X}}$, which as a cocartesian lift of $\varepsilon$ which
  is an identity, is itself an equivalence.

  Finally, the fact that $\func{f}^{\ast}\eta\coloneqq\eta_{!}$ and
  $\varepsilon_{!}$ satisfy the triangle identities is a consequence
  of the triangle identities for $\eta$ and $\varepsilon$, to which is
  applied the same reasoning we used to obtain~\cref{eq:compos-lifts}.
\end{proof}

It is worthwhile to note that the component of
$\func{f}^{\ast}\eta\colon\id\Rightarrow\func{f}^{\ast}\characmap{\id}\circ\func{f}^{\ast}\ev_{1}$
at an object
$(X,{\func{f}X\overset{\jmath}{\rightarrowtail}
  Q})\in\cat{X}\times_{\cat{P}}\cat{Ar}_{\mathrm{inrt}}(\cat{P})$ is
\begin{equation}
  \label{eq:basechgd-unit-component}
  \func{f}^{\ast}\eta_{(X,\func{f}X\rightarrowtail
    Q)}\colon\left(X,\begin{tikzcd}
    \func{f}X \arrow[d,tail] \\
    Q
  \end{tikzcd}\right)
  \begin{tikzcd}
    \vphantom{\func{f}X} \arrow[r,"{(X,\jmath)}"] & \vphantom{Q} \\
    \vphantom{Q} \arrow[r,"{(X,\id_{Q})}"'] & \vphantom{Q}
  \end{tikzcd}
  \left(X,\begin{tikzcd}
    Q \arrow[d,equals] \\
    Q
  \end{tikzcd}\right)\text{.}
\end{equation}

\begin{pros}
  \label{pros:localis-fibrtn}
  Let $\func{f}\colon\cat{X}\to\cat{P}$ be an $\infty$-functor such
  that $\cat{X}$ admits $\func{f}$-cocartesian lifts of inert
  morphisms. The $\infty$-functor
  $\func{f}^{\ast}\ev_{1}
  \colon\cat{X}\times_{\cat{P}}\cat{Ar}_{\inrt}(\cat{P}) \to\cat{X}$
  exhibits $\cat{X}$ as the localisation of
  $\cat{X}\times_{\cat{P}}\cat{Ar}_{\inrt}(\cat{P})$ at the set
  $\mathscr{I}_{\cat{X}}$ of morphisms
  $(X;(\func{f}(X)\rightarrowtail Q))
  \to(X^{\prime};(\func{f}(X^{\prime})\rightarrowtail Q^{\prime}))$
  such that
  \begin{itemize}
  \item $X\to X^{\prime}$ is $\func{f}$-cocartesian and
  \item
    $(\func{f}(X)\rightarrowtail Q)
    \to(\func{f}(X^{\prime})\rightarrowtail Q^{\prime})$ is
    $\ev_{0}$-cartesian and $\ev_{0}$-over an inert arrow.
  \end{itemize}
\end{pros}

\begin{proof}
  The proof follows the lines of that of~\Cref{pros:localis-base}. Let
  $\mathscr{W}_{\cat{X}}$ be the class of morphisms inverted by
  $\func{f}^{\ast}\ev_{1}$. A morphism of
  $\cat{X}\times_{\cat{P}}\cat{Ar}_{\inrt}(\cat{P})$, of the form
  $(\xi,\theta)$ where $\xi\colon X\to Y$ in $\cat{X}$ and $\theta$
  sits is a commutative square
  \begin{equation}
    \label{eq:morph-in-basechgd-arinrt}
    \begin{tikzcd}
      \func{f}X \arrow[d,tail,"\jmath"'] \arrow[r,"\func{f}\xi"] &
      \func{f}X^{\prime}
      \arrow[d,tail,"\jmath^{\prime}"] \\
      Q \arrow[r,"\theta"'] & Q^{\prime}
    \end{tikzcd}
  \end{equation}
  in $\cat{P}$, is in $\mathscr{W}_{\cat{X}}$ if and only if $\theta$
  is an equivalence $Q\simeq Q^{\prime}$, so that it induces a
  commutative square
  \begin{equation}
    \label{eq:invert-saturated-cart-inrt-basechgd}
    \begin{tikzcd}
      (X,\func{f}X\rightarrowtail Q) \arrow[r,"{(\xi,\theta)}"]
      \arrow[d,"\jmath_{!}"'] &
      (X^{\prime},\func{f}X^{\prime}\rightarrowtail
      Q^{\prime}) \arrow[d,"\jmath^{\prime}_{!}"] \\
      (X_{Q},\func{f}(X_{Q})=Q) \arrow[r,"{(\jmath_{!}\xi,\theta)}"']
      &
      (X^{\prime}_{Q^{\prime}},\func{f}(X^{\prime}_{Q^{\prime}}=Q^{\prime}))
    \end{tikzcd}
  \end{equation}
  where $X\xrightarrow{\jmath_{!}}X_{Q}$ and
  $X^{\prime}\xrightarrow{\jmath^{\prime}_{!}}X^{\prime}_{Q^{\prime}}$
  are cocartesian lifts of $\jmath$ and $\jmath^{\prime}$, and
  $\jmath_{!}\xi$ is the arrow $X_{Q}\to X^{\prime}_{Q^{\prime}}$
  uniquely induced by cartesianity, which is invertible since it lifts
  the isomorphism $Q\simeq Q^{\prime}$. The vertical morphisms are in
  $\mathscr{I}_{\cat{X}}$ by construction, so it follows from the
  2-of-3 property of equivalences that any $\infty$-functor that
  inverts the morphisms in $\mathscr{I}_{\cat{X}}$ will invert the
  morphisms in $\mathscr{W}_{\cat{X}}$, and that the localisations
  along $\mathscr{I}_{\cat{X}}$ and $\mathscr{W}_{\cat{X}}$ coincide.

  But again, it can be seen in~\cref{eq:basechgd-unit-component} that
  the components of $\func{f}^{\ast}\eta$ are in
  $\mathscr{I}_{\cat{X}}$ whence in $\mathscr{W}_{\cat{X}}$, so
  $\func{f}^{\ast}\ev_{1}$ is indeed a localisation along
  $\mathscr{W}_{\cat{X}}$.
\end{proof}

From now on, we assume that $\cat{P}$ is a sound pattern, and we let
$\cat{C}$ be a $\cat{P}$-complete $(\infty,1)$-category.

\begin{corlr}
  Let $\func{f}\colon\cat{X}\to\cat{P}$ be an $\infty$-functor such
  that $\cat{X}$ admits $\func{f}$-cocartesian lifts of inert
  morphisms. There is a fully faithful $\infty$-functor
  $\funcs{X}{C}
  \hookrightarrow\funcs{X}{\overline{Span}_{P}(C)}_{/\cat{P}}$ whose
  essential image is spanned by the $\infty$-functors preserving
  cocartesian morphisms over inert morphisms of $\cat{P}$.
\end{corlr}

\begin{proof}
  Direct application of~\cite[Proposition
  7.3]{gepner17:_lax_colim_free_fibrat_categ} shows that for any
  $(\infty,1)$-category $\cat{X}$ over $\cat{P}$ there is an
  equivalence
  \begin{equation}
    \label{eq:funcs-span-hom-fibrtn}
    \funcs{X}{\overline{Span}_{P}(C)}_{/\cat{P}}
    \simeq\funcs{X\times_{P}Ar_{\inrt}(P)}{C}\text{,}
  \end{equation}
  in which an $\infty$-functor
  $\func{S}\colon\cat{X}\to\overline{\cat{Span}}_{\cat{P}}(\cat{C})$
  over $\cat{P}$ (so mapping $X$ to
  $\func{S}(X)\colon\cat{P}^{\inrt}_{\func{f}X/}\to\cat{C}$)
  corresponds to
  $\widetilde{\func{S}}
  \colon\cat{X}\times_{\cat{P}}\cat{Ar}_{\inrt}(\cat{P}) \to\cat{C}$
  mapping
  \begin{equation}
    \label{eq:fibration-hom-curry}
    (X,\func{f}X\rightarrowtail Q)\mapsto
    \func{S}(X)(\func{f}X\rightarrowtail Q)\text{.}
  \end{equation}
  In addition, by the description of $\overline{\func{p}}$-cocartesian
  morphisms in $\overline{\cat{Span}}_{\cat{P}}(\cat{C})$ provided
  by~\cite[Corollary 3.2.2.13]{lurie09:_higher}, one sees that an
  $\infty$-functor
  $\func{S}\colon\cat{X}\to\overline{\cat{Span}}_{\cat{P}}(\cat{C})$
  takes an arrow $\xi\colon X\to X^{\prime}$ to a cocartesian arrow in
  $\overline{\cat{Span}}_{\cat{P}}(\cat{C})$ if and only if the
  corresponding $\widetilde{\func{S}}$ takes all morphisms
  $(\xi,\theta)$ where $\theta$ is $\ev_{0}$-cartesian in
  $\cat{Ar}_{\inrt}(\cat{P})$ to equivalences in $\cat{C}$.

  By~\Cref{pros:localis-fibrtn}, $\funcs{X}{C}$ identifies as the full
  sub-$(\infty,1)$-category of $\funcs{X\times_{P}Ar_{\inrt}(P)}{C}$
  on those $\infty$-functors inverting all morphisms in
  $\mathscr{I}_{\cat{X}}$. More precisely, the equivalence
  of~\cref{eq:funcs-span-hom-fibrtn} sits in the sequence
  \begin{equation}
    \label{eq:embed-hom-to-span}
    \funcs{X}{C}
    \simeq\funcs{X\times_{P}Ar_{\inrt}(P)}{C}_{(\mathscr{I}_{\cat{X}})}
    \hookrightarrow\funcs{X\times_{P}Ar_{\inrt}(P)}{C}
    \simeq\funcs{X}{\overline{Span}_{P}(C)}_{/\cat{P}}\text{.}
  \end{equation}
  One then only needs to observe that an $\infty$-functor
  $\widetilde{\mathcal{S}}\in\funcs{X\times_{P}Ar_{\inrt}(P)}{C}$,
  corresponding to
  $\mathcal{S}\in\funcs{X}{\overline{Span}_{P}(C)}_{/\cat{P}}$, is in
  $\funcs{X\times_{P}Ar_{\inrt}(P)}{C}_{(\mathscr{I}_{\cat{X}})}$ if
  and only if for any $\xi\colon X\to X^{\prime}$ in $\cat{X}$ that is
  $\func{f}$-cocartesian and any $\theta$ as
  in~\cref{eq:morph-in-basechgd-arinrt} that is $\ev_{0}$-cartesian
  and $\ev_{0}$-over an inert arrow,
  $\widetilde{\func{S}}(\xi,\theta)$ is an equivalence, which is
  exactly the description given above of $\func{S}$ taking
  $\func{f}$-cocartesian morphisms $\func{f}$-over (since
  $\ev_{0}(\theta)=\func{f}(\xi)$) an inert arrow to
  $\overline{\func{p}}$-cocartesian arrows.
\end{proof}

We can now arrive at our main result.

\begin{thm}
  \label{thm:monads-spans-segal-objs}
  Let $\cat{X}\to\cat{P}$ be a fibrous $\cat{P}$-pattern, with
  $\cat{P}$ sound, and $\cat{C}$ a $\cat{P}$-complete
  $(\infty,1)$-category. There is an equivalence of
  $(\infty,1)$-categories
  \begin{equation}
    \label{eq:main-equiv}
    \seg{\cat{X}}(\cat{C})
    \simeq\cat{Alg}_{\cat{X}}(\cat{Span}_{\cat{P}}(\cat{C}))\text{.}
  \end{equation}

  In particular, taking $\cat{X}=\cat{P}$ to be the terminal fibrous
  $\cat{P}$-pattern, we obtain~\cref{thm:main-result}.
\end{thm}

\begin{proof}
  Let
  $\func{S}\colon\cat{X}\to\overline{\cat{Span}}_{\cat{P}}(\cat{C})$
  be an $\infty$-functor over $\cat{P}$ that preserves cocartesian
  morphisms over inert arrows (so corresponds to
  $\widetilde{\func{S}}\colon\cat{X}\to\cat{C}$). It factors through
  $\cat{Span}_{\cat{P}}(\cat{C})$ if and only if for every
  $X\in\cat{X}$, the $\cat{P}^{\inrt}_{\func{f}X/}$-object
  $\func{S}(X)$ in $\cat{C}$ is Segal.

  By~\cite[Lemma 2.39]{barkan22:_arity_approx_operad}, for any map of
  algebraic patterns $\cat{O}\to\cat{P}$ and any $P\in\cat{P}$, the
  projection $\cat{O}\times_{\cat{P}}\cat{P}_{P/}\to\cat{P}$ is an
  iso-Segal morphism. Applying this to $\cat{O}=\cat{P}^{\inrt}$ and
  $P=\func{f}X$ (for any $X$), we find that the above condition is
  equivalent to $\widetilde{\func{S}}$ being a Segal $\cat{X}$-object.
\end{proof}

\section{Some examples: flavours of generalised multicategories}
\label{sec:some-exampl-flav}

\begin{remark}[Graphs and endomorphisms]
  Since the Segal condition for a pattern $\cat{P}^{\elem}$ with only
  elementary objects and inert morphisms is trivial, the underlying
  $\cat{P}$-graph of the $\cat{P}$-monoidal $(\infty,1)$-category
  $\internalcat{Span}_{\cat{P}}(\cat{C})$ is
  $\internalcat{Span}_{\cat{P}^{\elem}}(\cat{C})$, which is directly
  given by the $\infty$-functor $\funcs{P^{\elem}_{-/}}{C}$. Since
  $\cat{P}^{\elem}_{E/}$, for any elementary $E$, generally has a
  simple form, this will make the underlying $\cat{P}$-graph of
  $\cat{P}$-spans easy to describe.

  Furthermore, since the ``algebraic operations'' in Segal
  $\cat{P}$-objects come from active morphisms, a
  $\cat{P}^{\elem}$-monad carries no algebraic structure and can
  simply be seen as a $\cat{P}$-endomorphism. The statement
  of~\Cref{thm:monads-spans-segal-objs} thus restricts to saying that
  $\cat{P}$-endomorphisms in $\internalcat{Span}_{\cat{P}}(\cat{C})$
  are exactly $\cat{P}$-graphs in $\cat{C}$.
\end{remark}

\subsection{Categories and multiple categories}
\label{sec:multiple-cats-iter-spans}

Take $\cat{P}$ to be the pattern ${\op{\bbDelta}}^{\natural}$,
consisting of the simplicial indexing category $\op{\bbDelta}$ with
its usual inert-active factorisation system (where a map $[n]\to[m]$
in $\bbDelta$ is inert if it is a subinterval inclusion and active if
it is endpoints-preserving), and $[0]$ and $[1]$ as elementary
objects. Its Segal objects are internal categories.

\begin{subremark}
  Direct comparison shows that for any $[n]\in\op{\bbDelta}$, the
  category $({\op{\bbDelta}}^{\natural})^{\inrt}_{[n]/}$ is equivalent
  (in fact isomorphic) to the twisted arrow category of
  $\mathbb{n+1}=[n]$, as has been previously noticed in~\cite[Remark
  5.4]{haugseng18:_iterat} and implicitly used in~\cite[Remark
  2.18]{kositsyn21:_compl}: more precisely, a morphism in
  $\cat{Tw}(\mathbb{n+1})$ represented by a factorising square in
  $\mathbb{n+1}$ below-left
  \begin{equation}
    \label{eq:compar-delta-inrtslice-twisted}
    \begin{tikzcd}
      i \arrow[d,"i\leq j"'] & i^{\prime}
      \arrow[d,"i^{\prime}\leq j^{\prime}"] \arrow[l,"i^{\prime}\leq i"'] \\
      j \arrow[r,"j\leq j^{\prime}"'] & j^{\prime}
    \end{tikzcd}\qquad\qquad
    \begin{tikzcd}
      {[n]} \arrow[r,equals] & {[n]} \\
      {[j-i]}\simeq\{i,\dots,j\} \arrow[r,hook] \arrow[u,hook] &
      {[j^{\prime}-i^{\prime}]}\simeq\{i^{\prime},\dots,j^{\prime}\}
      \arrow[u,hook]
    \end{tikzcd}
  \end{equation}
  corresponds to the morphism in
  $({\op{\bbDelta}}^{\natural})^{\inrt}_{[n]/}$ represented as the
  commutative square (in $\bbDelta$) above-right.
\end{subremark}

Thus for any $(\infty,1)$-category $\cat{C}$ admitting finite fibre
products, $\internalcat{Span}_{{\op{\bbDelta}}^{\natural}}(\cat{C})$
is the double $(\infty,1)$-category of spans in $\cat{C}$ constructed
in~\cite{barwick13:_q} and~\cite{haugseng18:_iterat} (and denoted
$\cat{SPAN}_{1}^{+}(\cat{C})$ there).

Now, we also note that fibrous ${\op{\bbDelta}}^{\natural}$-patterns
are virtual double $\infty$-categories (also referred to as
generalised non-symmetric $\infty$-operads in~\cite{gepner15:_enric})
so that morphisms of fibrous ${\op{\bbDelta}}^{\natural}$-patterns
correspond to ``lax double functors'', and in particular
${\op{\bbDelta}}^{\natural}$-monads recover the usual notion of monad
in a virtual double $(\infty,1)$-category. In conclusion,
\Cref{thm:monads-spans-segal-objs} applied to the pattern
${\op{\bbDelta}}^{\natural}$ recovers the main theorem
of~\cite{haugseng21:_segal}, that monads (or algebras) in spans are
internal categories.

\begin{subexmp}
  \label{exmp:d-uple-monads}
  More generally, using products of algebraic patterns
  (\emph{cf.}~\Cref{exmp:product-pattrn}), one sees that for any
  $d\in\mathbb{N}$, the Segal
  ${\op{\bbDelta}}^{\natural,d}$-$(\infty,1)$-category
  $\internalcat{Span}_{{\op{\bbDelta}}^{\natural,d}}(\cat{C})$ is the
  $(d+1)$-uple $(\infty,1)$-category $\cat{SPAN}_{d}^{+}(\cat{C})$ of
  iterated spans also constructed in~\cite{haugseng18:_iterat}.

  We now explain how lax Segal
  ${\op{\bbDelta}}^{\natural,d}$-fibrations should be seen as virtual
  $(d+1)$-uple $\infty$-categories. When viewing (strong) Segal
  ${\op{\bbDelta}}^{\natural,d}$-fibrations as $(d+1)$-uple
  categories, one should separate the $d$ directions coming from
  $\bbDelta^{d}$, which we dub the \textbf{algebraic} directions, from
  the last one coming from straightening the cocartesian fibration,
  which we will know as the categorical, or \textbf{transversal},
  direction. A lax Segal ${\op{\bbDelta}}^{\natural,d}$-fibration
  $\cat{X}$ is then virtual in all the algebraic directions: it has,
  for all $n\leq d$, algebraic $n$-cells in the usual directions for
  $d$-uple categories, and it has transversal cells from any
  $n$-dimensional grid of $n$-cells to a single $n$-cell. We stress
  that, for the domains of the transversal $n$-cells, we only require
  grids rather than the more general $n$-uple pasting diagrams
  of~\cite{ruit22:_segal}, as the grids are the objects of
  $\bbDelta^{d}$.

  Let us represent the low dimensions; for ease of viewing we shall
  draw the transversal direction vertically, from top to bottom (since
  drawing it transversally would hide the face with the most
  information in the back).

  For $d=1$, the description --- of virtual double $\infty$-categories
  --- is well-known: there are objects and algebraic arrows, and in
  addition there are transversal arrows between objects and
  transversal cells from any pasting diagram (\emph{i.e.} composable
  sequence) of algebraic arrows to one algebraic arrow, drawn as
  $2$-cells in
  \begin{equation}
    \label{eq:virt-dbl-cat-cell}
    \begin{tikzcd}
      \cdot \arrow[r] \arrow[d] & \cdot \arrow[r,""{name=T,below}] &
      \cdot \arrow[r] &
      \cdot \arrow[d] \\
      \cdot \arrow[rrr,""{name=B,above}]
      \arrow[from=T,to=B,Rightarrow,shorten=6] & & & \cdot\text{.}
    \end{tikzcd}
  \end{equation}

  For $d=2$, we similarly have objects, two kinds of algebraic
  $1$-arrows, and algebraic squares or $2$-arrows, and in addition
  transversal $1$-arrows between objects, two kinds of transversal
  $2$-cells, corresponding to the two directions of algebraic arrows,
  and finally transversal cubes or $3$-cells for any grid of
  composable squares, as represented in
  \begin{equation}
    \label{eq:virt-trpl-cat-cell}
    \begin{tikzcd}
      \cdot \arrow[r,""{name=c11T,below,near end}] \arrow[dr]
      \arrow[dd] \arrow[ddrr,phantom,""{name=DiagTop,below}] & \cdot
      \arrow[r,""{name=c12T,below,near end}] \arrow[dr] & \cdot
      \arrow[r,""{name=c13T,below,near end}] \arrow[dr] & \cdot
      \arrow[r,""{name=c14T,below,near end}] \arrow[dr] & \cdot
      \arrow[dr] \\
      & \cdot \arrow[r,""{name=c11B,above,near
        start},""{name=c21T,below,near end}] \arrow[dr]
      \arrow[from=c11T,to=c11B,Rightarrow,shorten=6] & \cdot
      \arrow[r,""{name=c12B,above,near start},""{name=c22T,below,near
        end}] \arrow[dr]
      \arrow[from=c12T,to=c12B,Rightarrow,shorten=6] & \cdot
      \arrow[r,""{name=c13B,above,near start},""{name=c23T,below,near
        end}] \arrow[dr]
      \arrow[from=c13T,to=c13B,Rightarrow,shorten=6] & \cdot
      \arrow[r,""{name=c14B,above,near start},""{name=c24T,below,near
        end}] \arrow[dr]
      \arrow[from=c14T,to=c14B,Rightarrow,shorten=6] & \cdot
      \arrow[dr] \\
      \cdot \arrow[ddrr,""{name=DiagBot,above}]
      \arrow[from=DiagTop,to=DiagBot,Rightarrow,shorten=12] & & \cdot
      \arrow[r,""{name=c21B,above,near start}] \arrow[dd]
      \arrow[rrrr,phantom,""{name=Top,below}]
      \arrow[from=c21T,to=c21B,Rightarrow,shorten=6] & \cdot
      \arrow[r,""{name=c22B,above,near start}]
      \arrow[from=c22T,to=c22B,Rightarrow,shorten=6] & \cdot
      \arrow[r,""{name=c23B,above,near start}]
      \arrow[from=c23T,to=c23B,Rightarrow,shorten=6] & \cdot
      \arrow[r,""{name=c24B,above,near start}]
      \arrow[from=c24T,to=c24B,Rightarrow,shorten=6] & \cdot
      \arrow[dd]
      \\
      & \phantom{\cdot} & & & & \phantom{\cdot} & \\
      & & \cdot \arrow[rrrr,""{name=Bot,above}]
      \arrow[from=Top,to=Bot,Rightarrow,shorten=12] & & & & \cdot
    \end{tikzcd}
  \end{equation}
  where the $3$-cell is not visible but fills the cube.
  
  A ${\op{\bbDelta}}^{\natural,d}$-monad then consists of monads (whose
  structure cells are transversal) in all possible algebraic
  directions and throughout the different dimensions, resembling (a
  less lax version of) the intermonads
  of~\cite[\S{}7.1]{grandis17:_inter}.
\end{subexmp}

\begin{subexmp}
  If one takes instead the pattern ${\op{\bbDelta}}^{\flat}$, which
  has the same underlying category and factorisation system but only
  $[1]$ as elementary object --- whose Segal objects are internal
  categories $X_{\bullet}$ with trivial object $X_{0}$ of objects, so
  internal associative monoids --- then the monoidal
  $(\infty,1)$-category
  $\internalcat{Span}_{{\op{\bbDelta}}^{\flat}}(\cat{C})$, for
  $\cat{C}$ admitting finite products (for this is what
  ${\op{\bbDelta}}^{\flat}$-completeness means) is $\cat{C}$ itself
  seen with its cartesian monoidal structure.

  Generalising to ${\op{\bbDelta}}^{\flat,n}$ (whose Segal objects are
  $n$-iterated associative monoids, so $\mathcal{E}_{n}$-monoids), we
  have that $\internalcat{Span}_{{\op{\bbDelta}}^{\flat,n}}(\cat{C})$
  is $\cat{C}$ seen with its cartesian structure as an
  $\mathcal{E}_{n}$-monoidal structure.  In this case,
  \Cref{thm:monads-spans-segal-objs} simply recovers the fact that
  Segal ${\op{\bbDelta}}^{\flat,n}$-objects in a cartesian
  $(\infty,1)$-category $\cat{C}$ are $n$-uply commutative (meaning
  $\mathcal{E}_{n}$-) algebras in the cartesian monoidal
  $(\infty,1)$-category $\cat{C}^{\times}$ (\emph{i.e.}
  $\mathcal{E}_{n}$-monoids in $\cat{C}$).
\end{subexmp}

\subsection{Commutative monoids}
\label{sec:commutative-monoids}

Take $\cat{P}$ to be the pattern ${\op{\bbGamma}}^{\flat}$ where
$\op{\bbGamma}\simeq\mathfrak{Fin}_{\ast}$ is the opposite of Segal's
category, which is equivalent to the category of pointed finite sets,
with its usual inert-active factorisation system, and
$\langle1\rangle$ as the only elementary object. Its Segal objects are
commutative (or $\mathcal{E}_{\infty}$) monoids. As explained
in~\cite[Example 14.22]{chu21:_homot_segal}, this algebraic pattern is
not saturated; however its global saturation is easily seen from the
fact that ${\op{\bbGamma}}^{\flat,\elem}_{\langle n\rangle/}$ is a set
of $n$ elements.

It also follows that for any ${\op{\bbGamma}}^{\flat}$-complete
(\emph{i.e.} admitting finite products) $(\infty,1)$-category
$\cat{C}$, $\internalcat{Span}_{{\op{\bbGamma}}^{\flat}}(\cat{C})$ is
again $\cat{C}$ itself equipped with its cartesian symmetric monoidal
structure. Since fibrous ${\op{\bbGamma}}^{\flat}$-patterns are
$\infty$-operads in the sense of~\cite{lurie17:_higher_algeb} and
${\op{\bbGamma}}^{\flat}$-monads are commutative algebras, we recover
that Segal ${\op{\bbGamma}}^{\flat}$-objects in $\cat{C}$ are
commutative monoids in $\cat{C}$ (where it is again understood that
the term ``monoid'' refers to an algebra in a cartesian monoidal
$\infty$-category).

\begin{subremark}
  For the product pattern
  $\cat{P}={\op{\bbGamma}}^{\flat}\times{\op{\bbDelta}}^{\natural}$,
  whose Segal objects are internal symmetric monoidal categories, we
  recover as $\internalcat{Span}_{\cat{P}}(\cat{C})$ the double
  $(\infty,1)$-category of spans in $\cat{C}$, endowed with its
  symmetric monoidal structure coming from the cartesian product in
  $\cat{C}$.
\end{subremark}

\begin{subexmp}
  As a further variant, one may consider the algebraic pattern
  ${\op{\bbGamma}}^{\natural}$, which is like
  ${\op{\bbGamma}}^{\flat}$ but also has $\langle0\rangle$ as an
  additional elementary object. Its Segal objects in a
  ${\op{\bbGamma}}^{\natural}$-complete $(\infty,1)$-category
  $\cat{C}$ are commutative monoids in a slice of $\cat{C}$, which it
  is convenient to interpret as families of commutative monoids in
  $\cat{C}$ indexed by an object of $\cat{C}$. In the same spirit,
  fibrous ${\op{\bbGamma}}^{\natural}$-patterns are generalised
  $\infty$-operads of~\cite{lurie17:_higher_algeb}, which are the same
  thing as families of $\infty$-operads.

  For any object $\langle n\rangle$, the category
  ${\op{\bbGamma}}^{\natural,\elem}_{\langle n\rangle/}$ is
  \begin{equation}
    \label{eq:gamma-natrl-elem-slice}
    \begin{tikzcd}
      \rho_{1} & \rho_{2} & \cdots & \rho_{n-1} & \rho_{n}\\
      & & \bigl(\langle n\rangle\xrightarrow{!}\langle0\rangle\bigr)
      \arrow[from=ull] \arrow[from=ul] \arrow[from=ur]
      \arrow[from=urr] & &
    \end{tikzcd}
  \end{equation}
  where $\rho_{i}\colon\langle n\rangle\to\langle1\rangle$ sends $i$
  to $1$ and all the other elements of $\langle n\rangle$ to $0$, from
  which it is seen that the pattern ${\op{\bbGamma}}^{\natural}$ is
  globally saturated. More generally, any inert map
  $\langle n\rangle\to\langle k\rangle$ (with necessarily $k\leq n$)
  determines and is uniquely determined by a $k$-element subset of
  $n=\langle n\rangle\setminus\{0\}$, so that writing $\wp(n)$ for the
  powerset of $n$ (equipped with its natural order), we have
  ${\op{\bbGamma}}^{\natural,\elem}_{\langle n\rangle/}
  \simeq\op{\wp(n)}$. For example, for $n=3$ the poset
  ${\op{\bbGamma}}^{\natural,\elem}_{\langle3\rangle/}$ is
  \begin{equation}
    \label{eq:slice3}
    \begin{tikzcd}
      & & \{1,2,3\} \arrow[dl] \arrow[d] \arrow[dr] & & \\
      & \{1,2\} \arrow[dl] & \{1,3\} \arrow[dll] \arrow[drr] & \{2,3\}
      \arrow[dl,crossing over] \arrow[dr] \\
      \{1\} \arrow[drr] & & \{2\} \arrow[from=ul,crossing over]
      \arrow[d] & &
      \{3\} \arrow[dll] \\
      & & \{0\} & &
    \end{tikzcd}
  \end{equation}
  (containing copies of
  ${\op{\bbGamma}}^{\natural,\elem}_{\langle 2\rangle/}$,
  ${\op{\bbGamma}}^{\natural,\elem}_{\langle 1\rangle/}$, and
  ${\op{\bbGamma}}^{\natural,\elem}_{\langle 0\rangle/}$ on the
  left). We can thus see that
  $\internalcat{Span}_{{\op{\bbGamma}}^{\natural}}(\cat{C})$ is the
  family of slices of $\cat{C}$, each equipped with its monoidal
  structure given by the pullbacks in $\cat{C}$,
  and~\Cref{thm:monads-spans-segal-objs} recovers the description of
  Segal ${\op{\bbGamma}}^{\natural}$-objects given above.
\end{subexmp}

\subsection{Higher categories and iterated spans}
\label{sec:higher-categories-iter-spans}

We now take $\cat{P}$ to be the pattern
${\op{\Theta_{\ell}}}^{\natural}$ of~\Cref{remark:theta-finite-dim},
for some $\ell\in\mathbb{N}\cup\{\omega\}$.

It follows from the description given
in~\cref{eq:theta-elem-slice-over-globe} that
$\internalcat{Span}_{{\op{\Theta}_{\ell}}^{\natural}}(\cat{C})$ is a
cellular $(\infty,1)$-category of $\ell$-times iterated spans: for any
$k\leq\ell$, the $(\infty,1)$-category of $k$-cells has as objects the
spans between the apices of two $(k-1)$-iterated spans, and as
morphisms the morphisms between spans. In other words, it is a
categorical enhancement of the $(\infty,\ell+1)$-category
$\cat{Span}_{\ell}^{+}(\cat{C})$ of $\ell$-iterated spans
from~\cite[Definition 5.16, Remark 5.17]{haugseng18:_iterat}, obtained
by discarding all the extraneous ``algebraic'' directions of the
$(\ell+1)$-uple one as in [\emph{ibid.}] but still retaining the
transversal one.

\begin{subremark}
  At the level of the underlying ${\op{\Theta}}^{\natural}$-graph, the
  fact that our construction of the globular category of iterated
  spans through slices of
  ${\op{\Theta}}^{\natural,\elem}=\op{\mathbb{G}}$ recovers the
  combinatorial one given in~\cite[Definition
  3.2]{batanin98:_monoid_globul_categ_as_natur} was already observed
  in~\cite{street00}.
\end{subremark}

\begin{subexmp}
  For any $k\leq\ell$, we can also define the pattern
  $(\op{\Theta_{\ell}})^{\Sigma^{k}\natural}$ to consist of the same
  structure as ${\op{\Theta}}^{\natural}$ but only the globes
  $\cat{D}_{n}$ with $n\geq k$ as elementaries. For example, if $\ell$
  is finite, taking $k=\ell$ recovers the pattern denoted
  ${\op{\Theta}_{\ell}}^{\flat}$ in~\cite{chu21:_homot_segal}. Segal
  objects for $(\op{\Theta_{\ell}})^{\Sigma^{k}\natural}$ are
  $\mathcal{E}_{k}$-monoidal internal $(\ell-k)$-categories.
\end{subexmp}

As noted in~\cite[Example 9.8. (iv)]{chu21:_homot_segal}, fibrous
${\op{\Theta_{\ell}}}^{\natural}$-patterns are an $\infty$-categorical
version of the $\ell$-globular multicategories or many-sorted
$\ell$-globular operads
of~\cite[p. 273]{leinster04:_higher_operad_higher_categ}
and~\cite[Example 4.11]{cruttwell10}, themselves a many-sorted, or
coloured, version of the $\ell$-globular operads
of~\cite{batanin98:_monoid_globul_categ_as_natur}. They are similar to
the fibrous ${\op{\bbDelta}}^{\natural,\ell}$-patterns described
in~\Cref{exmp:d-uple-monads}, but where the domain of a transversal
$n$-cell is an $n$-categorical pasting diagram instead of an
$n$-dimensional grid (and its codomain is a single $n$-globe rather
than an $n$-cube).

\begin{subwarning}
  Despite their name of ``$\ell$-operads''
  in~\cite{batanin98:_monoid_globul_categ_as_natur}, fibrous
  ${\op{\bbDelta}}^{\natural,\ell}$-patterns should not be thought
  of as a kind of $(\infty,\ell)$-operads, meaning $\infty$-operads
  enriched in $\inflcats{\ell-1}$. Indeed, as seen from the
  description above, they contain more data and structure than
  $(\infty,\ell)$-operads.

  Likewise, the strong Segal
  ${\op{\Theta_{\ell}}}^{\natural}$-fibrations, known as ``monoidal
  $\ell$-globular categories'' in~[\emph{ibid.}], are really
  categorical $(\infty,\ell)$-categories. In particular,
  ${\op{\Theta_{\ell}}}^{\natural}$-monads are very different from any
  kind of usual $\ell$-categorical monads that could be made sense of
  (for example following the philosophy
  of~\cite{haugseng18:_theta_segal} identifying Segal
  ${\op{\Theta_{\ell+1}}}^{\natural}$-objects with reduced categorical
  ${\op{\Theta_{\ell}}}^{\natural}$-objects) in categorical
  $(\infty,\ell)$-categories: the
  ${\op{\Theta_{\ell}}}^{\natural}$-monad structure associates to any
  configuration of (algebraic) $n$-cells a \emph{transversal} cell, so
  is really independent of the $(\infty,\ell)$-categorical structure.

  As such, we will only refer to
  ${\op{\Theta_{\ell}}}^{\natural}$-monads as \textbf{$\ell$-globular
    monads}.
\end{subwarning}

We then obtain by applying~\Cref{thm:monads-spans-segal-objs} that
${\op{\Theta_{\ell}}}^{\natural}$-monads in the categorical
$(\infty,\ell)$-category
$\internalcat{Span}_{{\op{\Theta_{\ell}}}^{\natural}}(\cat{C})$ are
Segal ${\op{\Theta_{\ell}}}^{\natural}$-objects in $\cat{C}$, or in
more evocative language:

\begin{subcorlr}
  There is an equivalence of $(\infty,1)$-categories between
  $\ell$-globular monads in the categorical $(\infty,\ell)$-category
  $\internalcat{Span}_{\ell}^{+}(\cat{C})$ of $\ell$-iterated spans in
  $\cat{C}$, and internal $(\infty,\ell)$-categories in $\cat{C}$.
\end{subcorlr}

For $\ell=\omega$, this recovers a homotopical formulation of the
definition of weak $\omega$-categories given
by~\cite{batanin98:_monoid_globul_categ_as_natur} as well as that
of~\cite{leinster04:_higher_operad_higher_categ} (\emph{cf.}
\cite{cheng04:_higher_dimen_categ} for an explanation of the different
definitions of $\omega$-categories).

\subsection{Multicategories and multispans}
\label{sec:multicategories-multispans}

We finish by considering the algebraic pattern
${\op{\bbOmega}}^{\natural}$
(resp. ${\op{\bbOmega}}_{\text{pl.}}^{\natural}$) whose Segal objects
are internal coloured operads (resp. internal coloured planar
operads). Here, $\bbOmega$ is the dendroidal category, whose objects
are rooted trees (resp. with planar structure), henceforth referred to
as dendrices to avoid confusion with the objects of $\Theta$, and
whose morphisms express the grafting of dendrices --- in contrast with
the morphisms of $\Theta$ which express the pasting of trees. The
algebraic pattern structure is given by having the inert morphisms be
the sub-dendrex inclusions, the active morphisms the
boundary-preserving maps, and the elementary objects be the corollas
$\star_{a}$ (determined by their arities $a\in\mathbb{N}$) and the
nodeless edge $\eta$. As noted in~\cite[Examples
14.21]{chu21:_homot_segal}, the pattern ${\op{\bbOmega}}^{\natural}$
is saturated because any dendrex can be decomposed as a gluing of
corollas along edges, and the same argument shows that it is also
globally saturated.

To understand the dendroidal $(\infty,1)$-category
$\internalcat{Span}_{{\op{\bbOmega}}^{\natural}}(\cat{C})$, let us
first describe its underlying categorical
${\op{\bbOmega}}^{\natural}$-graph
$\internalcat{Span}_{{\op{\bbOmega}}^{\natural,\elem}}(\cat{C})$. At
the level of colours, we just have
$\bbOmega^{\natural,\elem}_{/\eta}=\{\id_{\eta}\}$. At the level of
operations, writing $e_{1},\dots,e_{a}$ the leaves of the corolla
$\star_{a}$ and $r$ its root, we find that
$\bbOmega^{\natural,\elem}_{/\star_{a}}$ is the category
\begin{equation}
  \label{eq:omega-slice-corolla}
  \begin{tikzcd}
    (\eta\xrightarrow{\characmap{e_{1}}}\star_{a}) \arrow[dr] & \cdots
    &
    (\eta\xrightarrow{\characmap{e_{a}}}\star_{a}) \arrow[dl] \\
    & \id_{\star_{a}} & \\
    & (\eta\xrightarrow{\characmap{r}}\star_{a}) \arrow[u] &
  \end{tikzcd}
\end{equation}
of $(a+1)$-ary multicospans.

The structure of category objects in multicategories (coloured
non-symmetric operads) was studied in~\cite[Definition
3.9]{cheng14:_cyclic}. In our case, we get for
$\internalcat{Span}_{{\op{\bbOmega}}^{\natural}}(\cat{C})$ an operadic
composition of multispans by fibre products along the relevant legs,
where each multispan has a distinguished root as seen
in~\cref{eq:omega-slice-corolla}.

\begin{subexmp}
  \label{exmp:cyclic-modular}
  It is also possible to replace ${\op{\bbOmega}}^{\natural}$ by the
  pattern ${\op{\Xi}}^{\natural}$ of~\cite{hackney19:_higher}, whose
  Segal objects are cyclic operads. We obtain for
  $\internalcat{Span}_{{\op{\Xi}}^{\natural}}(\cat{C})$ the same
  structure as above, except that the $(a+1)$-ary spans come without a
  choice of root. Note also that in $\Xi$, the nodeless edge $\eta$ is
  equipped with an involution, which for Segal objects becomes a
  ``duality'' operation on colours. In our case, it acts as the
  identity.

  Going further, we may also use the pattern
  ${\op{\bbUpsilon}}^{\natural}$ of~\cite{hackney20} (denoted
  $\mathbb{U}$ there), whose Segal objects are modular operads. The
  categorical modular $\infty$-operad
  $\internalcat{Span}_{{\op{\bbUpsilon}}^{\natural}}(\cat{C})$ works
  much as $\internalcat{Span}_{{\op{\Xi}}^{\natural}}(\cat{C})$, but
  with additional contraction operations that turn the abstract
  self-duality of objects into an actual self-duality (in the usual
  monoidal, or rather properadic, sense).
\end{subexmp}

The fibrous ${\op{\bbOmega}}^{\natural}$-patterns were identified
in~\cite{berger22:_momen} as the ``tree-hyperoperads'', which are
cumbersome to describe in detail (\emph{cf.}
\cite[\S{}4.1]{getzler98:_modul} or~\cite[Definition
5.45]{markl02:_operad_algeb_topol_physic} for the modular
generalisation, simply called hyperoperads). Nevertheless, we still
obtain from~\Cref{thm:monads-spans-segal-objs} that dendroidal monads
in the categorical $\infty$-operad of multispans in $\cat{C}$ are
internal operads in $\cat{C}$.

\begin{subremark}
  \label{exmp:catl-oprd-multispans}
  The definition of operads, and more general multicategorical
  structures, as monads in multispans is well-known: it dates
  to~\cite{burroni71:_t_catég}, and was independently rediscovered by
  both~\cite{hermida04:_fibrat}
  and~\cite{leinster98:_gener_operad_multic}, and then further
  systematised by~\cite{leinster04:_higher_operad_higher_categ}
  and~\cite{cruttwell10}. From a cartesian monad $\func{T}$ on a
  category $\cat{C}$, one constructs a double category of Kleisli
  $\func{T}$-spans, whose objects are those of $\cat{C}$ and morphisms
  from $C$ to $D$ are spans from $\func{T}C$ to $D$, composition of
  spans using the monad structure.

  For example, taking $\func{T}$ to be the monad
  $\func{F}_{{\op{\bbGamma}}^{\flat}}$ for free monoids, a Kleisli
  $\func{T}$-span is a multispan of arbitrary arity, and monads (in
  the double-categorical sense) are coloured operads. Generally
  speaking, if $\func{T}$ is the monad $\func{F}_{\cat{P}}$ for free
  Segal $\cat{P}$-objects on $\cat{P}$-graphs for some appropriate
  algebraic pattern $\cat{P}$, we expect that monads in Kleisli
  $\func{F}_{\cat{P}}$-spans should be fibrous
  $\cat{P}$-patterns, obtained as the Segal objects for a plus
  construction $\cat{P}^{+}$ of $\cat{P}$ (as in~\cite[Proposition
  3.2.10]{kern23:_monoid_groth_segal}), so as $\cat{P}^{+}$-monads in
  $\cat{P}^{+}$-spans.

  However, Kleisli $\func{F}_{{\op{\bbGamma}}^{\flat}}$-spans and
  ${\op{\bbOmega}}^{\natural}$-spans, while both admitting a natural
  interpretation as multispans, form markedly different structures. On
  the one hand,
  $\internalcat{Span}_{{\op{\bbOmega}}^{\natural}}(\cat{C})$ is a
  categorical $\infty$-operad, whose operadic composition is given
  (leg by leg) by simple pullbacks. On the other hand, Keisli
  $\func{F}_{{\op{\bbGamma}}^{\flat}}$-spans only form a double
  category, but its composition is more complex and makes full use of
  the monad structure on $\func{F}_{{\op{\bbGamma}}^{\flat}}$. For a
  general algebraic pattern $\cat{P}$, the difference will be similar:
  we think of it as moving the structure from the microcosm (on the
  Keisli $\func{F}_{\cat{P}}$-spans side) to the macrocosm (on the
  $\cat{P}^{+}$-spans side).

  It is nonetheless unclear what the precise relation between the two
  constructions is, if there even is one: the Keisli-type construction
  can be abstracted away from a span setting by using general monads
  acting on virtual double categories, but it is unlikely to be
  able to handle non-directed structures such as the cyclic and
  modular $\infty$-operads of~\Cref{exmp:cyclic-modular}.
\end{subremark}

\section{Conclusion: A fibrational perspective}
\label{sec:fibr-persp}

\begin{notation}
  In this~\namecref{sec:fibr-persp}, we will identify
  $(\infty,1)$-categories with internal categories in $\infgrpds$,
  where internal categories are by definition Segal
  ${\op{\bbDelta}}^{\natural}$-objects satisfying Rezk's
  univalence-completeness condition. It will also be convenient to see
  Segal $\cat{P}$-objects in $\infcats$ --- such as, in particular,
  the $\cat{P}$-$(\infty,1)$-categories of $\cat{P}$-spans --- as
  internal categories in $\seg{\cat{P}}(\infgrpds)$.
\end{notation}

Recall that, for any regular cardinal $\kappa$, the
$(\infty,1)$-category $\infgrpds^{(\kappa)}\subset\infgrpds$ is the
base of the universal discrete cocartesian fibration with
$\kappa$-small fibres
$\infgrpds^{(\kappa)}_{\bullet}\to\infgrpds^{(\kappa)}$ in the
$(\infty,2)$-topos $\infcats$, just as
$\cat{Set}^{(\kappa)}\subset\cat{Set}$ is the universal $\kappa$-small
discrete cocartesian fibration in the $(2,2)$-topos $\cat{Cat}$.
In~\cite[Examples 4.7 and 4.8]{weber07:_yoned_struc}, it is explained
that, for an algebraic pattern $\cat{P}^{\elem}$ in which all objects
are elementary and all morphisms inert, the construction
$\internalcat{Span}_{\cat{P}^{\elem}}(-)$ preserves classifying
discrete fibrations, so that the $2$-topos
$\cat{Cat}\bigl(\funcs{\cat{P}^{\elem}}{Set}\bigr)$ has a sufficient
family of classifying discrete cocartesian fibrations given by
$\internalcat{Span}_{\cat{P}^{\elem}}(\cat{Set}_{\bullet}^{(\kappa)})
\to\internalcat{Span}_{\cat{P}^{\elem}}(\cat{Set}^{(\kappa)})$ (where
``sufficient'' means that every discrete cocartesian fibration is
classified by one in the family).

In the $\infty$-categorical setting, the properties of universal (or
``classifying'') fibrations are captured by the notion of univalence,
which we restate from~\cite{gepner16:_unival} (see also~\cite[Theorem
4.4]{rasekh21:_unival_higher_categ_theor}) in the internal setting.

\begin{construct}
  Let $\cat{C}$ be a finitely complete $(\infty,1)$-category. Recall
  that a \textbf{discrete cocartesian fibration} in $\cat{C}$ is an
  internal functor $\func{f}\colon\internalcat{E}\to\internalcat{B}$
  such that
  $(d_{1},\func{f}_{1})\colon\internalcat{E}_{1}\to
  \internalcat{E}_{0}\times_{\internalcat{B}_{0}}\internalcat{B}_{1}$
  is an equivalence.

  Lifting the construction of~\cite[Theorem 2.10]{gepner16:_unival} to
  the cartesian closed $(\infty,2)$-category $\cat{Cat}(\cat{C})$, one
  can construct for any discrete cocartesian fibration
  $\func{f}\colon\internalcat{E}\to\internalcat{B}$ in $\cat{C}$ an
  internal category
  $\internalcat{Eq}_{/\internalcat{B}\times\internalcat{B}}(
  \varpi_{1}^{\ast}\internalcat{E},\varpi_{2}^{\ast}\internalcat{E})$
  over $\mathbb{B}\times\mathbb{B}$ (where
  $\varpi_{1},\varpi_{2}\colon\mathbb{B}\times\mathbb{B}\to\mathbb{B}$
  are the two projections) characterising equivalences between fibres
  of $\func{f}$.
\end{construct}

\begin{definition}[Univalent fibration]
  A discrete cocartesian fibration $\internalcat{E}\to\internalcat{B}$
  internal to $\cat{C}$ is \textbf{univalent} if
  $\internalcat{B}\to\internalcat{Eq}_{/\internalcat{B}\times\internalcat{B}}(
  \varpi_{1}^{\ast}\internalcat{E},\varpi_{2}^{\ast}\internalcat{E})$
  is an equivalence.
\end{definition}

\begin{exmp}
  It is shown in~\cite[Proposition
  5.3.13]{cisinski19:_higher_categ_homot_algeb} that the universal
  discrete cocartesian fibration $\infgrpds_{\bullet}\to\infgrpds$ (in
  the $(\infty,2)$-category $\infcats=\cat{Cat}(\infgrpds)$) is
  univalent.
\end{exmp}

Using this characterisation, one can show (though we omit the proof
here as this result is only used for motivation) that for any sound
algebraic pattern $\cat{P}^{\inrt}$ all of whose morphisms are inert,
the construction
$\internalcat{Span}_{\cat{P}^{\inrt}}(-)\colon\infcats^{(\text{$\cat{P}$-cplt})}
\to\cat{Cat}(\seg{\cat{P}^{\inrt}}(\infgrpds))$ preserves classifying
discrete cocartesian fibrations:

\begin{pros}
  Let $\cat{G}_{\bullet}\to\cat{G}$ be a univalent discrete
  cocartesian fibration. Then
  $\internalcat{Span}_{\cat{P}^{\inrt}}(\cat{G}_{\bullet})
  \to\internalcat{Span}_{\cat{P}^{\inrt}}(\cat{G})$ is a univalent
  discrete cocartesian fibration internally to
  $\seg{\cat{P}^{\inrt}}(\cat{G})$. \qed{}
\end{pros}

For an algebraic pattern with non-trivial active morphisms, the
situation becomes richer and goes beyond the $(\infty,2)$-topos theory
of internal categories in presheaf $(\infty,1)$-topoi. Indeed,
\Cref{thm:monads-spans-segal-objs} shows that, even when restricting
our attention as we are doing here from fibrous patters to Segal
fibrations, the morphisms of interest will be the lax morphisms, the
maps of underlying fibrous patterns. We will thus use a notion of lax
univalence, obtained by replacing strong morphisms by general lax
morphisms of categorical Segal $\cat{P}$-$\infty$-groupoids in the
definition of univalence for fibrations in $\seg{\cat{P}}(\infgrpds)$.

\begin{conj}
  \label{conj:span-preserves-lax-univ-fibr}
  Let $\cat{G}_{\bullet}\to\cat{G}$ be a univalent discrete
  cocartesian fibration. Then
  $\internalcat{Span}_{\cat{P}}(\cat{G}_{\bullet})
  \to\internalcat{Span}_{\cat{P}}(\cat{G})$ is a lax-univalent
  discrete cocartesian fibration internally to
  $\seg{\cat{P}}(\cat{G})$.
\end{conj}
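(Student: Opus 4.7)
The plan is to mirror the proof of the analogous proposition stated above for the inert pattern $\cat{P}^{\inrt}$, upgrading it to account for the active morphisms of $\cat{P}$ and the consequent need to consider lax rather than strong morphisms of Segal $\cat{P}$-$\infty$-groupoids. First, the discrete cocartesian fibration claim reduces, upon unwinding the Grothendieck construction, to showing that each fibre $\bigl\{\cat{P}^{\elem}_{P/},\cat{G}_{\bullet}\bigr\}\to\bigl\{\cat{P}^{\elem}_{P/},\cat{G}\bigr\}$ is a discrete cocartesian fibration; this is immediate since $\cat{G}_{\bullet}\to\cat{G}$ is one and this property is preserved by cotensoring with any $(\infty,1)$-category. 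The coherent naturality along inert morphisms (encoded via the co-internalisation $\cat{P}^{\inrt}_{-/}$) and the functoriality of the Span construction along active morphisms then assemble these fibres into the expected discrete cocartesian fibration internally to $\seg{\cat{P}}(\cat{G})$.

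Next, I would exhibit the lax-equivalences internal category $\internalcat{Eq}_{\mathrm{lax}}$: a point over a pair of $\cat{P}$-shaped spans in $\cat{G}$ should classify a system of equivalences in $\cat{G}_{\bullet}$ between their lifts, compatible with inert morphisms but only laxly so along active ones. Invoking \cref{thm:monads-spans-segal-objs} to identify such lax morphisms with Segal $\cat{P}$-objects valued in the univalent fibration $\cat{G}_{\bullet}\to\cat{G}$ should produce a natural internal functor $\internalcat{Span}_{\cat{P}}(\cat{G})\to\internalcat{Eq}_{\mathrm{lax}}$. To show that this functor is an equivalence---the substance of lax univalence---one checks fibre-wise over each $P\in\cat{P}$, where it reduces to the pointwise univalence of $\bigl\{\cat{P}^{\elem}_{P/},\cat{G}_{\bullet}\bigr\}\to\bigl\{\cat{P}^{\elem}_{P/},\cat{G}\bigr\}$, a formal consequence of the univalence of $\cat{G}_{\bullet}\to\cat{G}$. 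The global saturation of $\cat{P}$ then patches these fibre-wise equivalences along inert morphisms between elementaries, and the laxness of the comparison absorbs the remaining data along active morphisms.

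The main obstacle is making the notion of lax univalence fully rigorous in the first place, by constructing $\internalcat{Eq}_{\mathrm{lax}}$ as the internal hom in a suitable $(\infty,2)$-category of weak Segal $\cat{P}$-fibrations whose $1$-cells are lax morphisms. Developing this $(\infty,2)$-categorical framework---generalising the virtual double $(\infty,2)$-category of~\cite{haugseng21:_segal} to the algebraic-pattern setting, and ensuring that $\seg{\cat{P}}(\cat{G})$ is sufficiently exponentiable for such internal homs to exist---is the principal technical hurdle, which is why the result is stated here as a conjecture rather than a theorem.
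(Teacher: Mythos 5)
The paper does not prove this statement: it is explicitly labelled a conjecture, no argument is supplied, and even the preceding strong-univalence proposition for patterns with only inert morphisms is stated with its proof omitted. So there is no proof of record to compare yours against; what you have written is an outline of a possible attack, and you are right to flag that it cannot currently be completed.

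That said, let me point at where the outline is genuinely gapped rather than merely unfinished. The first half (that $\internalcat{Span}_{\cat{P}}(\cat{G}_{\bullet})\to\internalcat{Span}_{\cat{P}}(\cat{G})$ is a discrete cocartesian fibration) is plausible: the levelwise condition $(d_{1},\func{f}_{1})$ being an equivalence can be checked in each cotensor $\funcs{P^{\inrt}_{P/}}{G_{\bullet}}\to\funcs{P^{\inrt}_{P/}}{G}$, and left fibrations are stable under cotensoring. The problem is the univalence half. You propose to check that $\internalcat{Span}_{\cat{P}}(\cat{G})\to\internalcat{Eq}_{\mathrm{lax}}$ is an equivalence fibrewise over $P\in\cat{P}$, reducing to ordinary univalence of the cotensored fibration, with global saturation patching along inert morphisms and ``the laxness of the comparison absorb[ing] the remaining data along active ones.'' But $\internalcat{Eq}_{\mathrm{lax}}$ is by design \emph{not} computed fibrewise over $\cat{P}$: a lax morphism carries comparison cells along active morphisms, so the value of $\internalcat{Eq}_{\mathrm{lax}}$ at $P$ involves data indexed by active arrows out of $P$, not just the fibre at $P$. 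If the statement really did reduce to pointwise univalence of cotensors, the \emph{strong} univalence statement would follow by the same argument, and the whole reason the paper introduces lax univalence is that strong univalence is expected to fail in the presence of non-trivial active morphisms. So the sentence doing all the work in your plan is exactly the one with no content behind it. You correctly identify that constructing $\internalcat{Eq}_{\mathrm{lax}}$ as an internal hom in an $(\infty,2)$-category of weak Segal fibrations with lax $1$-cells is the prerequisite; until that object exists, there is no map to prove is an equivalence, and the fibrewise strategy is not a viable shortcut around it.
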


This~\nameCref{conj:span-preserves-lax-univ-fibr} states that
$\internalcat{Span}_{\cat{P}}(\cat{G}_{\bullet})
\to\internalcat{Span}_{\cat{P}}(\cat{G})$ classifies a class of
discrete cocartesian fibrations. It remains to see that \emph{every}
such class is classified by a universal fibration of this form.

\begin{conj}
  \label{conj:span-enough-lax-univ-fibr}
  Suppose
  $\bigl(\cat{G}^{(\kappa)}_{\bullet}\to\cat{G}^{(\kappa)}\bigr)_{\kappa\in
    K}$ is a sufficient family of univalent fibrations for
  $\infcats$. Then the family
  $\bigl(\internalcat{Span}_{\cat{P}}(\cat{G}^{(\kappa)}_{\bullet})
  \to\internalcat{Span}_{\cat{P}}(\cat{G}^{(\kappa)})\bigr)_{\kappa\in
    K}$ provides enough lax-univalent fibrations for
  $\cat{Cat}(\seg{\cat{P}}(\infgrpds))$.
\end{conj}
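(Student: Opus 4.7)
The plan is to view this as the fibrational incarnation of \cref{thm:monads-spans-segal-objs}: just as that theorem identifies Segal $\cat{X}$-objects in $\cat{C}$ with lax morphisms $\cat{X}\to\cat{Span}_{\cat{P}}(\cat{C})$, the present statement should say that a discrete cocartesian fibration $\phi\colon\internalcat{E}\to\internalcat{B}$ internal to $\seg{\cat{P}}(\infgrpds)$ is the same datum as a Segal $\internalcat{B}$-object in some $\cat{G}^{(\kappa)}$, which in turn by \cref{thm:monads-spans-segal-objs} corresponds to a lax morphism $\internalcat{B}\to\internalcat{Span}_{\cat{P}}(\cat{G}^{(\kappa)})$.

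Carrying this out, first note that evaluating $\phi$ at each $P\in\cat{P}$ produces a discrete cocartesian fibration $\phi_{P}\colon\internalcat{E}(P)\to\internalcat{B}(P)$ of $\infty$-groupoids. Since $(\cat{G}^{(\kappa)})_{\kappa\in K}$ is a sufficient family for $\infcats$, each $\phi_{P}$ is classified by a functor $c_{P}\colon\internalcat{B}(P)\to\cat{G}^{(\kappa_{P})}$, and by choosing $\kappa$ larger than every $\kappa_{P}$ (using the largeness hypothesis on the family) we may take the codomains uniform. The assembly of the $c_{P}$ into a coherent functor on $\internalcat{B}$ is governed by functoriality along $\cat{P}$: inert functoriality is controlled by the Segal condition on $\internalcat{E}$ and $\internalcat{B}$ together with \cref{conj:span-preserves-lax-univ-fibr} applied to the restriction to $\cat{P}^{\inrt}$, which gives a \emph{strong} morphism of $\cat{P}^{\inrt}$-objects $\restr{\internalcat{B}}_{\cat{P}^{\inrt}}\to\internalcat{Span}_{\cat{P}^{\inrt}}(\cat{G}^{(\kappa)})$; active functoriality on $\internalcat{E}$ and $\internalcat{B}$ then provides, for each active $\alpha\colon P\rightsquigarrow Q$, a structural comparison between fibres which precisely matches the codependent-coproduct action $\Sigma^{\alpha,\ast}$ defining active morphisms in $\internalcat{Span}_{\cat{P}}(\cat{G}^{(\kappa)})$, producing the lax extension of the inert classification to all of $\cat{P}$. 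Passing back through \cref{thm:monads-spans-segal-objs} with $\cat{X}=\internalcat{B}$ gives exactly the lax morphism $\internalcat{B}\to\internalcat{Span}_{\cat{P}}(\cat{G}^{(\kappa)})$ one needs, and unicity of the resulting pullback square identifies $\phi$ with the pullback of the universal fibration along this classifying morphism.

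The main obstacle is to promote this heuristic, which is really a diagram-chase per object $P\in\cat{P}$, to a genuine equivalence of $(\infty,1)$-categories between discrete cocartesian fibrations in $\seg{\cat{P}}(\infgrpds)$ over $\internalcat{B}$ and lax morphisms from $\internalcat{B}$ to the span construction. Away from the purely inert setting, $\seg{\cat{P}}(\infgrpds)$ is no longer a presheaf $\infty$-topos, so the standard internal straightening--unstraightening machinery (as used for the presheaf case in \cite{gepner16:_unival}) does not apply directly; one would need a version of Lurie's straightening that is natural in the lax $2$-cells induced by active morphisms, or equivalently, a proof that the $(\infty,2)$-category $\cat{Cat}(\seg{\cat{P}}(\infgrpds))$ admits a fibrational calculus in which $\internalcat{Span}_{\cat{P}}(-)$ acts as the lax-universal classifier. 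A viable route might be to bootstrap from \cref{thm:monads-spans-segal-objs} applied to weak Segal fibrations $\internalcat{B}\to\cat{P}$ with fibres in some universe, but verifying that the lax-univalence condition captures exactly the fibrations arising from Segal $\internalcat{B}$-objects — and no others — is where the genuine difficulty lies.
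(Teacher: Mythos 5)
The statement you are trying to prove is stated in the paper as a \emph{conjecture}: the author explicitly leaves it (together with \cref{conj:span-preserves-lax-univ-fibr}) unproved, and only uses the pair of conjectures as hypotheses in \cref{corlr:intrnl-fibr-laxmorph-span}. So there is no proof in the paper to compare yours against, and the relevant question is whether your proposal actually closes the conjecture. It does not, and to your credit you say so yourself: your final paragraph concedes that the objectwise classification has not been promoted to an equivalence of $(\infty,1)$-categories, and that the identification of lax-univalence with ``exactly the fibrations arising from Segal $\internalcat{B}$-objects'' is the genuine difficulty. That concession is precisely the content of the conjecture, so what you have written is a plausible strategy, not a proof.

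Two concrete gaps beyond the one you name. First, the assembly step is doing all the work and is unjustified: you choose a classifying map $c_{P}\colon\internalcat{B}(P)\to\cat{G}^{(\kappa_{P})}$ separately for each $P\in\cat{P}$ and then assert that ``functoriality along $\cat{P}$'' makes these cohere into a single (lax) morphism. Each $c_{P}$ is only well-defined up to a contractible space of choices, and exhibiting the full homotopy-coherent diagram over $\cat{P}$ --- with strong naturality along inerts and lax naturality along actives matching $\Sigma^{\alpha,\ast}$ --- is exactly the straightening statement whose absence you acknowledge at the end; it cannot be both assumed in the middle of the argument and flagged as the open problem at its end. (Also, the ``largeness hypothesis on the family'' you invoke to uniformise the $\kappa_{P}$ is not part of the hypothesis of the conjecture; ``sufficient family'' only says every discrete cocartesian fibration in $\infcats$ is classified by \emph{some} member.) Second, \cref{thm:monads-spans-segal-objs} identifies Segal $\cat{X}$-objects with lax morphisms for $\cat{X}\to\cat{P}$ a weak Segal fibration over the pattern $\cat{P}$; to apply it with ``$\cat{X}=\internalcat{B}$'' for an arbitrary internal category $\internalcat{B}$ in $\seg{\cat{P}}(\infgrpds)$ you would first need to recognise $\internalcat{B}$ as (the straightening of) such a fibration and to relate discrete cocartesian fibrations over $\internalcat{B}$ internal to $\seg{\cat{P}}(\infgrpds)$ to Segal objects on its total category --- a comparison that is itself part of what the conjecture asserts. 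In short, your outline correctly identifies the intended mechanism (the fibrational shadow of \cref{thm:monads-spans-segal-objs}), which is consistent with how the paper motivates the conjecture, but it does not supply the missing internal straightening--unstraightening result, and so the statement remains open.
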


\begin{corlr}
  \label{corlr:intrnl-fibr-laxmorph-span}
  Let $\cat{P}$ be a sound algebraic pattern and $\cat{X}\to\cat{P}$
  be a Segal $\cat{P}$-fibration, and assume
  that~\Cref{conj:span-preserves-lax-univ-fibr}
  and~\Cref{conj:span-enough-lax-univ-fibr} hold. Then Segal
  $\cat{X}$-objects in $\infgrpds$ are internal discrete cocartesian
  fibrations over the straightening $\mathbb{X}$ of $\cat{X}$.
\end{corlr}

\begin{proof}
  The key point is that, by~\cite[Proposition 3.8]{gepner16:_unival},
  if $\cat{G}^{(\kappa)}_{\bullet}\to\cat{G}^{(\kappa)}$ is univalent
  then $\cat{G}^{(\kappa)}$ is a full sub-$(\infty,1)$-category of
  $\infgrpds$, from which it follows that lax morphisms
  $\cat{X}\to\cat{Span}_{\cat{P}}(\cat{G}^{(\kappa)})$ can be seen as
  lax morphisms $\cat{X}\to\cat{Span}_{\cat{P}}(\infgrpds)$. By the
  two conjectures, discrete cocartesian fibrations over
  $\internalcat{X}$ are the same thing as lax morphisms
  $\cat{X}\to\cat{Span}_{\cat{P}}(\infgrpds)$ (factoring through some
  $\cat{Span}_{\cat{P}}(\cat{G}^{(\kappa)})$). At the same time,
  by~\Cref{thm:monads-spans-segal-objs}, the latter are the same thing
  as Segal $\cat{X}$-objects in $\infgrpds$ (or, to be precise, in
  some $\cat{G}^{(\kappa)}$), which proves the result.
\end{proof}

\begin{exmp}[Double fibrations and Segal fibrations]
  For the algebraic pattern, \Cref{corlr:intrnl-fibr-laxmorph-span}
  says explicitly that discrete cocartesian fibrations of double
  $\infty$-categories correspond biunivocally to lax double
  $\infty$-functors to the double $\infty$-category of spans (of
  $\infty$-groupoids). This is precisely an $\infty$-categorical
  version of the main construction
  of~\cite{lambert21:_discr_doubl_fibrat}. This use of internal
  discrete fibrations is also very similar to
  how~\cite{rasekh21:_cartes_fibrat_compl_segal_spaces} deals with
  fibrations of Segal spaces (see
  also~\cite[\S{}6.1.1]{loubaton23:_theor} for the version for
  fibrations of $\omega$-categories).
\end{exmp}



\vspace{5mm}
\noindent
David Kern\\
KTH Royal Institute of Technology\\
Department of Mathematics\\
SE-100 44 Stockholm (Sweden)\\
dkern@kth.se


\end{document}